\newtheorem{theorem}{Theorem}
\newtheorem{thm}{Theorem}
\newtheorem{lemma}[thm]{Lemma}
\newtheorem{quest}{Question}
\providecommand{\abs}[1]{\left\vert#1\right\vert}
\providecommand{\floor}[1]{\left\lfloor#1\right\rfloor}
\renewcommand{\Pr}{\mathbb{P}}
\newcommand{\pr}{\mathbb{P}}
\newcommand{\E}{\mathbb{E}}
\newcommand{\Var}{\operatorname{Var}}
\newcommand{\eps}{\varepsilon}
\newcommand{\cD}{\mathcal{D}}
\newcommand{\cE}{\mathcal{E}}
\newcommand{\sint}{s}
\newcommand{\rsize}{r}
\newcommand{\uu}{u}
\newcommand\dd{\,\mathrm{d}}
\newcommand\xpar[1]{(#1)}
\newcommand\bigpar[1]{\bigl(#1\bigr)}
\newcommand\Bigpar[1]{\Bigl(#1\Bigr)}
\newcommand\biggpar[1]{\biggl(#1\biggr)}
\newcommand\bigsqpar[1]{\bigl[#1\bigr]}
\newcommand\Bigsqpar[1]{\Bigl[#1\Bigr]}
\newcommand\biggsqpar[1]{\biggl[#1\biggr]}
\newcommand\lrsqpar[1]{\left[#1\right]}
\newcommand\bigcpar[1]{\bigl\{#1\bigr\}}
\newcommand\Bigcpar[1]{\Bigl\{#1\Bigr\}}
\newcommand\Biggcpar[1]{\Biggl\{#1\Biggr\}}
\newcommand{\indic}[1]{\mathbbm{1}_{\{{#1}\}}}
\newcommand\noproof{\qed}
\let\OLDthebibliography\thebibliography
\renewcommand\thebibliography[1]{
  \OLDthebibliography{#1}
  \setlength{\parskip}{0pt}
  \setlength{\itemsep}{0pt plus 0.3ex}
}
\begin{document}
\title{Two-Point Concentration of the Domination Number \\ of Random Graphs}
\author{Tom Bohman\thanks{Department of Mathematical Sciences, Carnegie Mellon University, Pittsburgh, PA 15213, USA. Email: {\tt tbohman@math.cmu.edu}. Supported by NSF grant DMS-2246907 and a grant from the Simons Foundation (587088, TB).}
 \and Lutz Warnke\thanks{Department of Mathematics, University of California, San Diego, La Jolla CA~92093, USA. E-mail: {\tt lwarnke@ucsd.edu}. Supported by NSF~CAREER grant~DMS-2225631 and a Sloan Research Fellowship.}
 \and Emily Zhu\thanks{Department of Mathematics, University of California, San Diego, La Jolla CA~92093, USA. E-mail: {\tt e9zhu@ucsd.edu}. Supported by the NSF Graduate Research Fellowship Program under Grant No.~DGE-2038238.}}
\date{February 12, 2024}
\maketitle

\begin{abstract}
We show that the domination number of the binomial random graph $G_{n,p}$ with edge-probability~$p$ is concentrated on two values for~${p \ge n^{-2/3+\eps}}$, 
and not concentrated on two values for general~${p \le n^{-2/3}}$. 
This refutes a conjecture of Glebov, Liebenau and Szab\'o, who showed two-point concentration for ${p \ge n^{-1/2+\eps}}$, and 
conjectured that two-point concentration fails for ${p \ll n^{-1/2}}$. 

The proof of our main result requires a Poisson type approximation for the probability that a random bipartite graph has no isolated vertices, 
in a regime where standard tools are unavailable (as the expected number of isolated vertices is relatively large). 
We achieve this approximation by adapting the proof of Janson's inequality to this situation, and this adaptation may be of broader interest. 
\end{abstract}

\section{Introduction}
In the theory of random graphs, the study of the extent of concentration of key parameters of the binomial random graph~$G_{n,p}$ has been a central issue from the very beginning~\cite{B2001,JLR2000,FK2016}. 
Landmark two-point concentration results for parameters such as the clique number and chromatic number played an important role for the development of the field~\cite{matula,1976cliques,ss,L1991,ak,AN}, 
and thus one might believe that it is nowadays merely an advanced 
exercise to establish such sharp concentration results for~$G_{n,p}$.
But this is not always the case: some parameters 
exhibit interesting new phenomena or difficulties that require a further development of the theory~\cite{h,hr,bohman2022,SWZ2023}. 
In particular, in this paper two-point concentration requires us to go beyond standard concentration inequalities, i.e., to adapt the proof of Janson's inequality in a tailor-made way. 

We study two-point concentration of the domination number~$\gamma(G_{n,p})$ of the binomial random graph~$G_{n,p}$.
The domination number~$\gamma(G)$ of a graph~$G$ is the smallest number of vertices of a dominating set of~$G$, i.e., a set~$K$ such that every vertex~$v \in V(G) \setminus K$ has at least one neighbor in~$K$. 
This well-known parameter has been studied from many different angles, including graph theory~\cite{Ore1962,BC1979,West1996,HHSS1998}, complexity theory~\cite{GJ1979,ABFKR2002,HHHH2002,AFR2004,DFHT2005}, random graph theory~\cite{W1981,NS1993,WG2001,glebov2015}, and notably also the probabilistic method~\cite{PM}, where in the authoritative textbook by Alon and Spencer it appears as the third example.
For random graphs, the quest for two-point concentration of the domination number~$\gamma(G_{n,p})$ has a long history: 
it was first established around~1981 by Weber~\cite{W1981} for uniform random graphs, where the edge-probability~$p=p(n)$ satisfies~${p=1/2}$. 
This was extended around~2001 by Wieland and Godbole~\cite{WG2001} to fairly dense random graphs~$G_{n,p}$ with~${p \gg \sqrt{(\log \log n)/(\log n)}}$.
Two-point concentration of~$\gamma(G_{n,p})$ was further improved in~2015 to much sparser random graphs~$G_{n,p}$ with~${p \gg (\log n)^2n^{-1/2}}$ by Glebov, Liebenau and Szab\'o~\cite{glebov2015}, 
who also conjectured that their result is essentially best possible, i.e., that two-point concentration of~$\gamma(G_{n,p})$ ought to fail for~$p \ll n^{-1/2}$. 

The main result of this paper shows that two-point concentration of the domination number~$\gamma(G_{n,p})$ of the binomial random graph~$G_{n,p}$ 
with edge-probability~$p=p(n)$ extends down to roughly~$p = n^{-2/3}$, 
refuting the aforementioned conjecture of Glebov, Liebenau and Szab\'o. 
\begin{theorem}[Two-point concentration for~$p \ge n^{-2/3+\eps}$]
\label{thm:main}
If~$p = p(n)$ 
satisfies ${(\log n)^3n^{-2/3} \leq p \leq 1}$, 
then the domination number~$\gamma(G_{n,p})$ of the binomial random graph~$G_{n,p}$ is concentrated on at most two values, 
i.e., there exists~$\hat{\rsize} = \hat{\rsize}(n,p)$ such that $\Pr\bigl(\gamma(G_{n,p}) \in \{\hat{\rsize},\hat{\rsize}+1\}\bigr) \to 1$ as~$n \to \infty$. 
\end{theorem}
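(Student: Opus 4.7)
Let $X_r$ denote the number of dominating sets of size exactly $r$ in $G_{n,p}$. Since a size-$r$ set $K$ is dominating iff each of the $n-r$ vertices of $V \setminus K$ has at least one neighbor in $K$, one has
\[
\E[X_r] = \binom{n}{r}\bigl(1-(1-p)^r\bigr)^{n-r}.
\]
A short calculation gives $\E[X_{r+1}]/\E[X_r] \sim n/r$ near the threshold; in the regime $p \ge (\log n)^3 n^{-2/3}$ this is at least $n^{1/3}$ up to polylogarithmic factors (since $r \approx (\log n)/p$). I would set $\hat{\rsize}$ to be the smallest integer $r$ with $\E[X_r] \ge n^{-\eps/2}$; the huge consecutive ratio then forces $\E[X_{\hat{\rsize}-1}] \le n^{-\Omega(1)}$ and $\E[X_{\hat{\rsize}+1}] \ge n^{\Omega(1)}$.

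The lower bound $\gamma(G_{n,p}) \ge \hat{\rsize}$ whp is immediate from Markov's inequality applied at $r = \hat{\rsize}-1$:
\[
\Pr\bigl(\gamma(G_{n,p}) \le \hat{\rsize}-1\bigr) \le \E[X_{\hat{\rsize}-1}] = o(1).
\]
The core of the proof is the matching upper bound $\Pr(X_{\hat{\rsize}+1}=0) = o(1)$. I would attack this via a second moment calculation on $X = X_{\hat{\rsize}+1}$, decomposing by the overlap $k = |K \cap K'|$:
\[
\E[X^2] = \sum_{k=0}^{\hat{\rsize}+1} \binom{n}{\hat{\rsize}+1}\binom{\hat{\rsize}+1}{k}\binom{n-\hat{\rsize}-1}{\hat{\rsize}+1-k}\,q_k,
\]
with $q_k = \Pr(A_K \cap A_{K'})$ and $A_K$ the event that $K$ is dominating. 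The event $A_K \cap A_{K'}$ requires every vertex of $V \setminus (K \cup K')$ to have a neighbor in both $K$ and $K'$, while each vertex of the symmetric difference $K \triangle K'$ must have a neighbor in the opposite part; hence $q_k$ is the probability that a certain structured random bipartite graph has no isolated vertex on a prescribed side. The plan is to show that the $k=0$ (independent) term dominates, so that $\E[X^2]/\E[X]^2 = 1+o(1)$, and then Chebyshev's inequality yields $\Pr(X = 0) = o(1)$.

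The hard part, announced in the abstract, is estimating $q_k$ sharply for a certain range of intermediate overlap sizes $k$: there the associated random bipartite graph has an expected number of isolated vertices that is no longer $O(1)$ but grows (at least logarithmically in $n$ for $p \approx n^{-2/3}$), so standard Poisson/Chen--Stein comparison and classical Janson's inequality fail to track the exponent of $\Pr(\text{no isolated vertex})$ with sufficient precision. The plan, following the abstract, is to re-open the proof of Janson's inequality in the bipartite no-isolated-vertex setting: by exploiting the product independence across vertices on the non-dominating side, one retains the higher-order correction terms usually absorbed into the crude $\exp(-\mu^2/(\mu+\Delta))$ bound, thereby obtaining a Poisson-type estimate that remains sharp when the mean is large. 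Inserting this refined bound on $q_k$ into the second-moment decomposition, and controlling the diagonal $k = \hat{\rsize}+1$ term via $1/\E[X] = o(1)$, should then close the argument.
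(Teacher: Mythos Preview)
Your outline is correct and matches the paper's approach: first and second moment on $X_{\hat r+1}$, decomposition by overlap size, and an adaptation of Janson's inequality to get a sharp Poisson-type upper bound on the bipartite no-isolated-vertex probability. One sharpening: the Poisson-approximation difficulty is already present at $k=0$ (the events $A_K,A_{K'}$ are \emph{not} independent even for disjoint $K,K'$, precisely because of the mutual-domination constraint on $K\triangle K'$), and the expected number of isolated vertices in that bipartite graph can be as large as $n^{1/3+o(1)}$ for $p$ near $n^{-2/3}$, not merely polylogarithmic---so the refined Janson bound is needed across the whole small-overlap range, not just an intermediate window.
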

\noindent
The range of the edge-probability~$p=p(n)$ in Theorem~\ref{thm:main} is best possible up to logarithmic factors: 
the following anti-concentration result shows that two-point concentration of~$\gamma(G_{n,p})$ fails below~$p = n^{-2/3}$. 
\begin{lemma}[No two-point concentration for~$p \le n^{-2/3}$]\label{lem:anti}
If~$p = p(n)$ 
satisfies ${n^{-1} \ll p \ll (\log n)^{2/3}n^{-2/3}}$, 
then there exists $q = q(n)$ with $p \le q \le 2p$ such that 
the domination number~$\gamma(G_{n,q})$ of~$G_{n,q}$ 
is not concentrated on two values: 
we have $\max_{r \ge 0}\Pr\bigl(\gamma(G_{n,q}) \in \{r,r+1\}\bigr) \le 3/4$ for an infinite sequence of~$n$.  
\end{lemma}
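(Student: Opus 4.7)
My plan is to produce, for infinitely many $n$, a specific $q^\star \in [p, 2p]$ at which the distribution of $\gamma(G_{n, q^\star})$ is spread across at least three integer values with non-trivial mass, thereby ruling out two-point concentration.

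I first set up a monotone coupling of $(G_{n,q})_{q \in [p, 2p]}$, so that $q \mapsto \gamma(G_{n,q})$ is a non-increasing integer-valued function of $q$, and note that for each $r$ the function $F_r(q) := \Pr(\gamma(G_{n,q}) \le r)$ is a polynomial in $q$ and hence continuous in $q$. I also perform a first-moment calculation with $\mu_r(q) := \binom{n}{r}(1-(1-q)^r)^n$ and identify the naive threshold $r^*(q)$ implicitly by $\mu_{r^*(q)}(q) \approx 1$. A routine asymptotic analysis shows $r^*(q) \approx (1 - \log q / \log n)\log n / q$ up to $O(\log\log n / q)$ corrections, and in particular $r^*(p) - r^*(2p) \to \infty$, so many integer thresholds are crossed as $q$ traverses $[p, 2p]$.

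The crucial step is to analyze $F_r(q)$ for $r$ near $r^*(q)$ in the regime $p \ll (\log n)^{2/3} n^{-2/3}$. Here I expect the standard Poisson heuristic for $F_r(q)$ to fail: the relevant Janson-type covariance correction---essentially controlled by the probability that an auxiliary random bipartite graph has no isolated vertex in the regime where the expected number of isolated vertices is large---is too big to allow a sharp transition, because a direct computation of $\sum_{S \ne S'}\Pr(S, S' \text{ both dominate})$ by decomposition along $|S \cap S'|$ yields a covariance-to-square-mean ratio that grows super-polynomially. Consequently $F_r(q)$ passes through intermediate values for several consecutive integer $r$'s at a fixed $q$, rather than jumping from $o(1)$ to $1 - o(1)$ between $r$ and $r+1$. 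Using the continuity of $F_r(q)$ in $q$, I then choose $q^\star \in [p, 2p]$ so that three consecutive values $F_{r^\star}(q^\star), F_{r^\star+1}(q^\star), F_{r^\star+2}(q^\star)$ all lie in $(1/4, 3/4)$. This forces $\Pr(\gamma(G_{n, q^\star}) = r^\star + i) \ge 1/8$ for $i \in \{0, 1, 2\}$, and hence $\max_r \Pr(\gamma(G_{n, q^\star}) \in \{r, r+1\}) \le 3/4$.

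The main obstacle is obtaining sharp enough two-sided estimates on $F_r(q)$ in this non-Poisson regime, where Markov, Paley-Zygmund, and the classical version of Janson's inequality are all too loose. I expect the paper's refined Poisson approximation---the tool developed by the authors for Theorem~\ref{thm:main}---to be exactly what enables a sharp calculation of $F_r(q)$; since that tool handles precisely the ``large number of isolated vertices'' regime flagged in the abstract, it should pin down $F_r(q)$ tightly enough to exhibit the desired three-integer spread. Aligning three consecutive $F_{r^\star+i}(q^\star)$ simultaneously into $(1/4, 3/4)$ is a secondary issue, resolved by a pigeonhole / careful choice of $q^\star$ among the many integer transitions inside $[p, 2p]$.
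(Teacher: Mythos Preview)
Your plan has a genuine gap at its core. You are relying on the paper's refined Poisson approximation to produce sharp two-sided estimates on $F_r(q)=\Pr(\gamma(G_{n,q})\le r)$, but that tool is developed and used exclusively for the \emph{concentration} direction (Theorem~\ref{thm:main}): it bounds $\Pr(\text{$A$ and $B$ dominate each other})$ in the variance calculation, and it yields no information about the distribution function $F_r$. More fundamentally, your inference ``the covariance-to-square-mean ratio grows super-polynomially, consequently $F_r(q)$ passes through intermediate values for several consecutive $r$'' is a non-sequitur: the failure of the second moment method is evidence that a particular \emph{proof} of concentration breaks down, not that concentration itself fails. Establishing that $F_{r^\star}(q^\star)>1/4$ and $F_{r^\star+2}(q^\star)<3/4$ simultaneously would require genuine lower bounds on both $\Pr(X_{r^\star}>0)$ and $\Pr(X_{r^\star+2}=0)$ in a regime where standard tools are unavailable; you have not indicated how to obtain either, and the paper's Poisson-type bound does not supply them.

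The paper's actual proof of Lemma~\ref{lem:anti} (via the more general Lemma~\ref{lem:anti:general}) avoids this difficulty entirely. It never estimates $F_r(q)$ directly. Instead it argues by contradiction: assume that for every $q\in[p,2p]$ the domination number is concentrated on an interval of length at most $\ell(n,q)=\eps\log(nq)/(nq^{3/2})$. Partition $[p,2p]$ into steps of size $\Delta\approx \eps\sqrt{p}/n$; a total-variation coupling (Lemma~\ref{lem:coupling}) shows consecutive concentration intervals must overlap, so they chain together. But the endpoints satisfy $\gamma(G_{n,p})-\gamma(G_{n,2p})=(1+o(1))\log(np)/(2p)$ by the known asymptotics (Lemma~\ref{lem:typical}), and summing the interval lengths over the $\approx p/\Delta$ steps gives a total that is too small to cover this gap when $p\ll (\log n)^{2/3}n^{-2/3}$. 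This averaging/telescoping argument needs only the crude asymptotic $\gamma(G_{n,q})\sim\log(nq)/q$ and no fine control of~$F_r$.
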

\noindent
A generalization of Lemma~\ref{lem:anti} shows that we cannot strengthen Theorem~\ref{thm:main} to one-point concentration, see Lemma~\ref{lem:anti:general} in Section~\ref{sec:ss}.
We did not optimize the logarithmic factors in the range of~$p=p(n)$ in Theorem~\ref{thm:main}, but it seems that some extra bells and whistles might be needed to potentially match the range of~Lemma~\ref{lem:anti}. 

The proof of the two-point concentration result Theorem~\ref{thm:main} is based on a careful application of the first and second moment method. 
As we shall discuss in Section~\ref{sec:barrier}, 
a major new difficulty starts to appear around~${p = n^{-1/2}}$ in the required variance calculations, 
where we need to establish a Poisson type approximation result in a regime where standard tools are unavailable. 
We achieve this approximation by adapting the proof of Janson's inequality~\cite{JLR1990,SJ1990,RW2015,JS2016} to the situation: this new adaptation idea is the main conceptual contribution of this paper, and we believe that it may be of broader interest (since Janson's inequality is widely used in the study of random discrete~structures);  
see Sections~\ref{sec:novelty} and~\ref{sec:Poisson} for more details. 

The proof of the anti-concentration result Lemma~\ref{lem:anti} is based on a simple coupling argument that was developed by Alon and Krivelevich~\cite{ak} and Sah and Sawhney~\cite{bohman2022} for the clique cover number and independence number of~$G_{n,p}$, respectively. 
Note that the statement of Lemma~\ref{lem:anti} 
does not rule out that~$\gamma(G_{n,p})$ could be two-point concentrated for some edge-probabilities~$p=p(n)$ with $n^{-1} \ll p \ll (\log n)^{2/3}n^{-2/3}$, but this kind of non-monotone concentration behavior would be rather surprising here (though not impossible in general, see~\cite{SW2022}). 
Strengthening this anti-concentration statement 
to all such~$p=p(n)$  
is one of several interesting open questions that we discuss in the concluding Section~\ref{sec:conclusion} of this~paper.

\subsection{Organization of the paper}
In the next subsection we pinpoint a new difficulty that arises in the proof of Theorem~\ref{thm:main} below~${p=n^{-1/2}}$, 
and discuss why it requires us to adapt Janson's inequality in a tailor-made way. 
In Section~\ref{sec:novelty} we illustrate our adaptation of Janson's inequality in a simpler setting, namely in the context of bounding the probability that a random bipartite graph has no isolated vertices. 
The proof of our main two-point concentration result Theorem~\ref{thm:main} follows in Section~3, 
and Section~4 contains the proof of the anti-concentration result Lemma~\ref{lem:anti} (and a generalization thereof). 
We close the paper with a Conclusion that includes some open questions and commentary on the concentration of the domination number of~$G_{n,p}$ below~$p = n^{-2/3}$, see Section~\ref{sec:conclusion}.

\subsection{Poisson approximation difficulty below~$p=n^{-1/2}$}\label{sec:barrier}  
In the following we informally discuss and pinpoint one major difficulty in the proof of Theorem~\ref{thm:main} that only arises below~${p=n^{-1/2}}$. 
The goals here are two-fold: (i)~to illustrate why the previous work of Glebov, Liebenau and Szab\'o~\cite{glebov2015} stopped around~$p=n^{-1/2}$, 
and (ii)~to motivate why an adaption of Janson's inequality is key for establishing the desired two-point concentration $\Pr\bigl(\gamma(G_{n,p}) \in \{\hat{\rsize},\hat{\rsize}+1\}\bigr) \to 1$ as~$n \to \infty$.   

The proof of Theorem~\ref{thm:main} is based on the first and second moment method. 
To this end, let the random variable~$X_r$ denote the number of dominating sets of size~$r$ in $G_{n,p}$. 
Guided by the natural `expectation threshold' heuristic, we choose $\hat{r}$ such that~${\E X_{\hat{r}+1} \to \infty}$ and~${\E X_{\hat{r}-1} \to 0}$ (see~\eqref{def:hatr} and Lemma~\ref{lem:estimates} in Section~\ref{sec:proof}). 
For sufficiently small~$p$, a standard first moment calculation (see Lemma~\ref{lem:estimates} in Section~\ref{sec:proof}) gives
\begin{equation}\label{heur:rhat}
\hat{r} = \frac{ \log(np) - 2 \log\log(np) + o(1)}{p}.
\end{equation}
A central issue for the second moment calculation is to determine the probability that pairs of vertex sets of size $r:=\hat{r}+1$ both dominate the full random graph~$G_{n,p}$. 
More precisely, for most pairs~$A,B$ of sets of size~$|A|=|B|=r$ we intuitively need to show approximate independence of the event~$\cD_A$ that $A$ is a dominating set and the event~$\cD_B$ that $B$ is a dominating set, i.e., that 
\begin{equation}\label{heur:goal:approx}
\Pr(\cD_A \cap \cD_B) \le (1+o(1)) \Pr(\cD_A) \Pr(\cD_B). 
\end{equation}
This estimate requires a key idea that we shall now discuss in the special case\footnote{For~$p \gg (\log n) n^{-1/2}$ the disjoint case~${|A \cap B|=0}$ is in fact the most important one: the crux is that in~$G_{n,p}$ two random vertex-subsets~$A,B$ of size~$r$ then typically intersect in about~$r \cdot r/n \approx \log^2(np)/(np^2) = o(1)$ many vertices.} where the sets~$A$ and~$B$ of size~$r$ are vertex disjoint, i.e., satisfy~${|A \cap B|=0}$. 
To this end we write the event~$\cD_A$ as 
\begin{equation*}
\underbrace{\bigcpar{\text{$A$ is a dominating set}}}_{=\cD_A} = \underbrace{\bigcpar{\text{$A$ dominates $[n] \setminus (A \cup B)$}}}_{=: \cD_{A, [n] \setminus (A \cup B)}} \cap \underbrace{\bigcpar{\text{$A$ dominates $B$}}}_{=: \cD_{A,B}},
\end{equation*}
and analogously~$\cD_B = \cD_{B, [n] \setminus (A \cup B)} \cap \cD_{B,A}$. 
The events~$\cD_{A, [n] \setminus (A \cup B)} \cap \cD_{B, [n] \setminus (A \cup B)}$ and~$\cD_{A,B} \cap \cD_{B,A}$ are independent.
For disjoint~$A$ and~$B$, the events~$\cD_{A, [n] \setminus (A \cup B)}$ and $\cD_{B, [n] \setminus (A \cup B)}$ are also independent.
It follows that our goal~\eqref{heur:goal:approx} is equivalent to approximate independence of the events~$\cD_{A,B}$ and~$\cD_{B,A}$, i.e.,~to 
\begin{equation}\label{heur:goal}
\Pr(\text{$\cD_{A,B} \cap \cD_{B,A}$}) \le (1+o(1)) \Pr(\cD_{A,B})\Pr(\cD_{B,A}) .
\end{equation}
Since~$A$ dominates~$B$ if and only if every vertex~$v \in B \setminus A$ has at least one neighbor inside~$A$, 
using~$|A \cap B|=0$ and the estimate~\eqref{heur:rhat} for~$r=\hat{r}+1$ it follows by a standard random graph calculation~that 
\begin{equation}\label{heur:prob}
\Pr(\cD_{A,B})=\Pr(\cD_{B,A}) = ( 1 - (1- p)^{r})^{r} 
= e^{- r(1- p)^{r} + o(1)} 
= \exp \left\{ -\frac{ \log^3(np)}{ np^2} + o(1)\right\}.
\end{equation}
Note that the probabilities in~\eqref{heur:prob} tend to one when~$p \gg (\log n)^{3/2} n^{-1/2}$, 
in which case~\eqref{heur:goal} simply follows from ${\Pr(\text{$\cD_{A,B} \cap \cD_{B,A}$})} \le 1$. 
Hence the required approximate independence~\eqref{heur:goal} was not a material issue in previous work~{\cite{W1981,WG2001,glebov2015}}, 
where the event~$\cD_{A,B} \cap \cD_{B,A}$ was simply ignored (which is not possible below~$p=n^{-1/2}$).  

To go below~$p =(\log n)^{3/2} n^{-1/2}$ we thus need to carefully estimate~$\Pr(\text{$\cD_{A,B} \cap \cD_{B,A}$})$, 
which turns out to be a major difficulty. 
Recalling that~$A$ and~$B$ are disjoint, 
observe that the event~$\cD_{A,B} \cap \cD_{B,A}$ that~$A$ and~$B$ dominate each other is simply the event 
that~$X=0$, where~$X$ denotes the number of isolated vertices in the induced bipartite subgraph of~$G_{n,p}$ with
parts $A,B$. 
Since the expected number of isolated vertices in this graph equals $\E X = 2r ( 1- p)^{r}$, 
by comparison with~\eqref{heur:prob} it follows that our goal~\eqref{heur:goal} is equivalent~to 
\begin{equation}\label{heur:goal:Janson}
\Pr(X=0) \le (1+o(1)) e^{-\E X} ,
\end{equation}
i.e., to a  Poisson type approximation for the probability that the discussed random bipartite graph with parts~$A$ and~$B$ has no isolated vertices.
Janson's inequality is the standard tool for this kind of Poisson approximation, 
which however is insufficient for establishing~\eqref{heur:goal:Janson}: the issue is that the event that vertex $ a \in A$ is isolated and the event that vertex $b\in B$ is isolated are dependent events, which implies that the `dependency' parameter $\Delta$ in Janson's inequality is too large to be useful. 
Other standard approaches such as the method of moments, Stein–Chen method and inclusion-exclusion type arguments are also unavailable:
the issue is that these usually only work when the expectation~$\E X$ is relatively small, 
whereas in view of~\eqref{heur:prob} the expectation can here be as large as~$\E X = \Theta(\log^3(np)/(np^2))= n^{1/3+o(1)}$ when~$p = n^{-2/3+o(1)}$. 

In this paper we overcome this difficulty by adapting the proof of Janson's inequality to leverage the fact that, while the event that $a \in A$ is isolated and the event that $ b\in B$ is isolated are dependent events, they are only very mildly dependent. As we believe that this adaptation of Janson's inequality may be of
broader interest, in Section~\ref{sec:novelty} we state a separate Theorem and proof that illustrate the argument in the context of random bipartite graphs. 
Of course, the proof of Theorem~\ref{thm:main} requires that we consider the probability that arbitrary pairs~$A,B$ of vertex sets of size $r$ dominate each other (rather than just disjoint pairs of sets):  this extension to all pairs~$A,B$ is treated in~\eqref{eq:sintbig}--\eqref{eq:rhoseq} and Section~\ref{sec:Poisson}, as part of the proof of Theorem~\ref{thm:main}.

\section{Key ingredient: Poisson approximation beyond Janson}\label{sec:novelty}
As discussed in Section~\ref{sec:barrier} of the introduction, 
the proof of our main result Theorem~\ref{thm:main} requires an 
exponential upper bound for the probability that a certain sum~$X$ of mildly dependent 
indicator random variables is zero, that does not seem to follow easily from known
methods.
In particular, for moderately large expectations~$\E X$ 
we need a Poisson approximation result of the~form
\begin{equation}\label{def:Poisson}
\Pr(X=0) \le (1+o(1))e^{-\mu} \quad \text{ with } \quad \mu := \E X ,
\end{equation}
but it turns out that standard tools and approaches fail in our application. 
Indeed, Janson's inequality gives insufficient bounds, as we shall discuss.  
Furthermore, the method of moments, the Stein–Chen method and include-exclusion arguments are all unavailable, 
since these usually only manage to establish~\eqref{def:Poisson} when the expectation is fairly small, say~${\mu=\Theta(1)}$ or perhaps~${\mu=o(\log n)}$. 
For our application we thus had to develop a new argument for establishing Poisson approximation of form~\eqref{def:Poisson} that takes advantage of the fact the dependence among the indicator random
variables is mild, 
and in this section we illustrate our argument.

Let~$X$ denote the number of isolated vertices in the bipartite random graph~$G_{N,N,p}$ with vertex classes~$V_1$ and~$V_2$ of size~$|V_1|=|V_2|=N$.
Writing~$V := V_1 \cup V_2$ we can then decompose 
\begin{equation}\label{def:X}
X = \sum_{v \in V}I_v \quad \text{ with } \quad \mu = \E X = \sum_{v \in V} \E I_v ,
\end{equation}
where~$I_v$ is the indicator random variable for the event that~$v$ is isolated in~$G_{N,N,p}$. 
Janson's inequality yields 
\begin{equation}\label{eq:Janson}
\Pr(X=0) \le e^{-\mu + \Delta/2} \quad \text{ with } \quad  \Delta := \sum_{v \in V}\sum_{w \in V: w \sim v} \E(I_vI_w) ,  
\end{equation}
where we write~$w \sim v$ when the vertices~$w$ and~$v$ can potentially be neighbors in~$G_{N,N,p}$ (which means that they are in different vertex classes). 
Janson's inequality~\eqref{eq:Janson} fails to establish the desired Poisson approximation~\eqref{def:Poisson} because~$\Delta= \mu^2/[2(1-p)]$ is too~large. 
Formally~\eqref{eq:Janson} gives a poor bound because every variable~$I_v$ has some dependency with half of all variables (all variables~$I_w$ with~$w \sim v$), 
but the main conceptual reason is that Janson's inequality is unable to take into account that all these dependencies are fairly mild. 

We obtain the following improved inequality
by taking the discussed mild dependencies into account,
which effectively allows us to reduce the~$\Delta$ term in the exponent of Janson's inequality~\eqref{eq:Janson} to about~$\Delta p$. 
\begin{theorem}\label{thm:Poisson} 
Let $X$ denote the number of isolated vertices in the random bipartite graph $G_{N,N,p}$, and let
$\mu$ and~$\Delta$ be defined as in~\eqref{def:X} and~\eqref{eq:Janson} above. Then we have
\begin{equation}\label{eq:Poisson}
e^{-\mu+\mu^2/(2N)} \le \Pr(X=0) \le e^{-\mu + \Delta p \cdot \log(1+\mu/(\Delta p))} ,
\end{equation}
where the lower bound holds when~$\Pr(I_v=1) \le 1/2$. 
\end{theorem}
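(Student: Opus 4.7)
The plan is to adapt the conditional-probability expansion at the heart of Janson's inequality so as to exploit the special structure of bipartite dependency. The crucial observation is that for $v \in V_1$ and $w \in V_2$ (the only pairs with $v \sim w$), the isolation indicators $I_v$ and $I_w$ share exactly one Bernoulli variable: the indicator $Y_{vw}$ of the edge $vw$ being absent. Writing $I_v = J_v Y_{vw}$ and $I_w = J_w Y_{vw}$ with $J_v, J_w, Y_{vw}$ mutually independent, we have $\Pr(A_v \cap A_w) = (1-p)^{-1}\Pr(A_v)\Pr(A_w)$, and the covariance $\Pr(A_v\cap A_w) - \Pr(A_v)\Pr(A_w) = p(1-p)^{2N-1}$ carries an extra factor of~$p$ compared to the raw joint probability. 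This is what ultimately allows us to replace the $\Delta/2$ of Janson by (essentially) $\Delta p$ in the exponent.

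For the upper bound, I would order the vertices as $v_1,\dots,v_N$ (from $V_1$) followed by $w_1,\dots,w_N$ (from $V_2$), and telescope
\[
\Pr(X=0) = \prod_{i=1}^N \Pr\bigpar{\bar A_{v_i} \mid \bar A_{v_{<i}}} \cdot \prod_{j=1}^N \Pr\bigpar{\bar A_{w_j} \mid \bar A_{v_1},\ldots,\bar A_{v_N},\bar A_{w_{<j}}}.
\]
The $V_1$-factors equal $1-(1-p)^N$, since the events $A_{v_i}$ depend on disjoint edge sets. For each $V_2$-factor, Bayes' rule together with the observation that under $A_{w_j}$ the events $\bar A_{v_i}$ become mutually independent (each with probability $1-(1-p)^{N-1}$) yields the closed form
\[
\Pr\bigpar{A_{w_j} \mid \bigcap_{i=1}^N \bar A_{v_i}} = (1-p)^N \cdot \Bigpar{\tfrac{1-(1-p)^{N-1}}{1-(1-p)^N}}^N.
\]
A negative-correlation (FKG-style) step then shows that further conditioning on earlier $\bar A_{w_k}$ can only decrease each $V_2$-factor, so the $V_2$-product is bounded above by this closed form raised to the $N$-th power. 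Combining the two products and applying $1-x \le e^{-x}$ together with a Chernoff-style optimization of the form $1-(1-x)^N \le \min(Nx,1)$ converts this into the stated upper bound $e^{-\mu + \Delta p \log(1+\mu/(\Delta p))}$; the logarithm emerges from this optimization.

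The lower bound follows from the FKG inequality applied to the increasing events $\bar A_v = \{v \text{ has some incident edge}\}$, giving $\Pr(X=0) \ge (1-(1-p)^N)^{2N}$, combined with a routine $\log(1-x)$ expansion that uses the hypothesis $(1-p)^N = \Pr(I_v=1) \le 1/2$ and the identity $2N(1-p)^{2N} = \mu^2/(2N)$.

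The main obstacle is the rigorous justification of the negative-correlation step in the upper bound: that conditioning on $\bar A_{w_k}$ for $k<j$, in addition to $\bigcap_i \bar A_{v_i}$, can only decrease $\Pr(\bar A_{w_j}\mid\cdot)$. Intuitively, ``using up'' edges from $V_1$ to make $w_k$ non-isolated leaves fewer for $w_j$, but a clean proof of this conditional negative association seems to require either a bespoke coupling argument or to bypass this step entirely by running the Janson telescope directly in a way that carefully absorbs the ``independent part'' $\Pr(A_v)\Pr(A_w)$ of the joint probability into the main exponential term, leaving only the ``covariance part'' $p\cdot\Pr(A_v)\Pr(A_w)/(1-p)$ as a correction at each step.
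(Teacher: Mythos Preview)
Your approach is genuinely different from the paper's, but it has two real gaps.

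\textbf{The negative-correlation step is unjustified, and not ``FKG-style''.} You need
\[
\Pr\Bigl(\bar A_{w_j}\ \Big|\ \bigcap_i \bar A_{v_i},\ \bigcap_{k<j}\bar A_{w_k}\Bigr)\ \le\ \Pr\Bigl(\bar A_{w_j}\ \Big|\ \bigcap_i \bar A_{v_i}\Bigr),
\]
i.e., conditional negative association of the events $\{\bar A_{w_j}\}_j$ given $\bigcap_i \bar A_{v_i}$. This is \emph{not} an FKG consequence: all the events $\bar A_v$ are increasing, so FKG on the product edge-space gives positive correlation, and after conditioning on the increasing event $\bigcap_i \bar A_{v_i}$ the measure is no longer a product measure, so FKG does not apply at all. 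The claim does appear to be true (it is a balls-and-bins type negative dependence, and one can verify it by hand for $N=2$), but proving it in general is a separate lemma that you have not supplied; you yourself flag it as the ``main obstacle''. The fallback you sketch --- ``run the Janson telescope directly, absorbing the independent part'' --- is exactly the difficulty the paper is written to overcome, and it does not come for free from the standard Janson argument.

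\textbf{The ``optimization'' step is not connected to the stated exponent.} Your telescoping, if the negative-correlation step is granted, yields the closed form $(1-\pi)^N\bigl[1-\pi(1-\alpha)^N\bigr]^N$ with $\pi=(1-p)^N$ and $\alpha=\pi p/((1-p)(1-\pi))$; there is no free parameter here, so it is unclear what you are ``Chernoff-optimizing'' or how the logarithm in $\Delta p\log(1+\mu/(\Delta p))$ would emerge. In the paper that logarithm has a specific origin: optimization over a genuine tilt parameter.

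For comparison, the paper avoids both issues by working with $\Phi(s)=\E e^{-sX}$ and the bound $-\log\Pr(X=0)\ge -\int_0^\lambda(\log\Phi(s))'\,\mathrm{d}s$. For each $v$ it introduces auxiliary indicators $I_w^v$ (isolation of $w$ when the status of the single pair $vw$ is ignored) and decomposes $X=I_v+X_v-Y_v$ with $X_v=\sum_{w\neq v}I_w^v$ and $Y_v=\sum_{w\in\Gamma(v)}I_w^v$. Because $X_v$ is independent of all edges at $v$, one gets the exact identity $\E(I_ve^{-sX})=\Pr(I_v=1)\,e^{-s}\,\E e^{-sX_v}$. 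Then, conditioning on $\Gamma(v)$ and applying FKG (in the correct direction, on the remaining product space) gives $\E e^{-sX}\le \E e^{-sX_v}\cdot e^{(e^s-1)\delta_v p}$; the crucial extra factor $p$ appears because $\E\bigl[(1+c)^{\indic{vw\in E}}\bigr]=1+cp$. Integrating the resulting lower bound on $-(\log\Phi)'$ and optimizing at $\lambda=\log(1+\mu/(\Delta p))$ is what produces the logarithm --- no conditional negative-association lemma is needed anywhere.
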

\noindent
It is instructive that~$\Delta p$ naturally appears in the variance ${\Var X = \mu-\mu^2/(2N)  + \Delta p}$. 
In particular, ignoring logarithmic factors in the constraints, 
\mbox{Theorem~\ref{thm:Poisson}} gives the following Poisson approximation results: 
${\Pr(X=0)=e^{-(1+o(1))\mu}}$ when ${|\Var X-\mu| = o(\mu)}$, 
and ${\Pr(X=0)=(1+o(1))e^{-\mu}}$ when ${|\Var X-\mu| = o(1)}$.

Our proof of Theorem~\ref{thm:Poisson} uses correlation inequalities as well as conditional independence arguments, 
and also exploits the bipartite structure of~$G_{N,N,p}$ in a crucial way. 
After giving the moment generating function~${\Phi(s) :=  \E(e^{-sX})}$ related basic setup, 
we shall below describe some of our new ideas for establishing the upper bound in inequality~\eqref{eq:Poisson}, 
and discuss how they differ from the proof of Janson's inequality~\eqref{eq:Janson}. 
Indeed, for any~$\lambda \ge 0$ we have~$\Phi(\lambda) = \E(e^{-\lambda X}) \ge \Pr(X=0)$,
which together with~$\Phi(0)=1$ yields
\begin{equation}\label{eq:zero}
-\log \Pr(X=0) \ge -\log \Phi(\lambda) = -\int_{0}^{\lambda} \bigpar{\log \Phi(s) }' \dd s  .
\end{equation}
Direct calculation shows that the we can write the (negative) logarithmic derivative~as
\begin{equation}\label{eq:MGF}
-\bigpar{\log \Phi(s) }' = \frac{-\Phi'(s)}{\Phi(s)}
= \sum_{v \in V}\frac{\E(I_v e^{-sX})}{\E(e^{-sX})} .
\end{equation}
To establish an upper bound on~$\Pr(X=0)$, 
as usual for concentration inequality proofs, it remains to prove a lower bound on~$-\bigpar{\log \Phi(s) }'$, 
insert that bound into~\eqref{eq:zero}, 
and then minimize the resulting expression with respect to~$\lambda \ge 0$. 
Here the major technical obstacle is that the summands of~$X=\sum_{v \in V}I_v$ are not all independent, 
which in particular means that one cannot factorize~$\E(I_v e^{-sX})$ or~$\E(e^{-sX})$ easily. 

The proof of Janson's inequality~\eqref{eq:Janson} beautifully gets around this obstacle as follows.
First~${X={X'_{v}+Y'_{v}}}$ is decomposed into two parts, where~$X'_v={\sum_{w \in V \setminus \{v\}:w \not\sim v}}I_w$ is the sum of all the variables~$I_w$ that are independent of~$I_v$, and~$Y'_v={I_v + \sum_{w \in V:w \sim v}}I_w$ is the sum over all other~$I_w$. 
Then the FKG inequality and conditional independence are used to derive a lower bound on the numerator~${\E(I_v e^{-sX})}={\Pr(I_v=1)\E(e^{-sX'_{v}} e^{-sY'_{v}}|I_v=1)}$ that cancels out the~${\E(e^{-sX})}$ in the denominator of~\eqref{eq:MGF},
followed by two applications of Jensen's inequality to obtain a useful lower bound on the right-hand size of~\eqref{eq:MGF}; see~\cite[Theorem~1]{SJ1990} or~\cite[Theorem~14 and~18]{JLR2000} for the proof~details. 

Our proof of Theorem~\ref{thm:Poisson} takes a related but conceptually different approach to recover the crucial extra factor~$p$ in the exponent of~\eqref{eq:Poisson} that is 
missing in Janson's inequality~\eqref{eq:Janson}.   
The key idea is to decompose~${X=I_v+X_{v}-Y_{v}}$ in a tailor-made way, which carefully takes into account that there are only very mild dependencies between the summands of~$X= \sum_{v \in V}I_v$. 
As we shall see, this enables us to derive an exact expression for the numerator~$\E(I_v e^{-sX})={\Pr(I_v=1)e^{-s}\E(e^{-sX_{v}})}$, and then use the FKG inequality and conditional independence to derive an upper bound on the denominator~${\E(e^{-sX})} \le {\E(e^{-sX_v} e^{sY_v})}$ that cancels out the~$\E(e^{-sX_{v}})$ in the numerator. 
This way we obtain a lower bound on the right-hand size of~\eqref{eq:MGF} that eventually allows us to establish the improved upper bound~in~\eqref{eq:Poisson}. 

\begin{proof}[Proof of Theorem~\ref{thm:Poisson}]
We start with the upper bound in inequality~\eqref{eq:Poisson}, proceeding from the discussed identities~\eqref{eq:zero} and~\eqref{eq:MGF}. 
Fix a vertex~$v \in V$. 
For any vertex~$w \in V \setminus \{v\}$ the key idea is to introduce~$I^{v}_{w}$, 
which is the indicator random variable for the event that~$w$ is isolated when we ignore the edge-status of the pair~$vw$ (i.e., it does not matter whether the pair~$vw$ is an edge or not). 
Writing~$\Gamma(v)$ for the neighbors of~$v$ in~$G_{N,N,p}$, 
note that~${I_{w} = I^{v}_{w}\indic{w \not\in \Gamma(v)}}$ for any vertex~$w \in V \setminus \{v\}$ 
(since we deterministically have~$\indic{w \not\in \Gamma(v)}=1$ when~$v$ and~$w$ are in the same vertex class). 
Since by definition~$v \not\in\Gamma(v)$, we then decompose~${X= \sum_{v \in V}I_v}$~as
\begin{equation}\label{eq:X:decomp}
\begin{split}
X \; = \; I_v \: + \sum_{w \in V \setminus \{v\}}I^{v}_{w}\indic{w \not\in \Gamma(v)} \; = \; I_v \: + \: \underbrace{\sum_{w \in V \setminus \{v\}}I^{v}_{w}}_{=: X_v} \: - \: \underbrace{\sum_{w \in \Gamma(v)}I^{v}_{w}}_{=: Y_v} . 
\end{split}
\end{equation}
Note that~$I_v=1$ corresponds to the event that~$v$ has no neighbors in~$G_{N,N,p}$, i.e., that~${\Gamma(v)=\emptyset}$. 
Hence~$I_v=1$ implies~$Y_v=0$, so using the decomposition~$X = I_v + X_v - Y_v$ from~\eqref{eq:X:decomp} it follows that
\begin{equation*}
 \E(I_v e^{-sX}) = \Pr(I_v=1)\E(e^{-sX} \mid I_v=1)
 = \Pr(I_v=1)e^{-s} \E(e^{-sX_v} \mid I_v=1).
\end{equation*}
Since~$X_v$ is by construction independent of the edge-status of all vertex pairs adjacent to~$v$, 
it follows~that 
\begin{equation}\label{eq:IvMGF:eq}
 \E(I_v e^{-sX}) = \Pr(I_v=1) e^{-s} \E(e^{-sX_v}).
\end{equation}

With an eye on~\eqref{eq:MGF}, we next relate~$\E(e^{-sX})$ with~$\E(e^{-sX_v})$. 
Using that~\eqref{eq:X:decomp} implies~$X \ge X_v-Y_v$, by taking all possible neighborhoods~$\Gamma(v)$ of~$v$ into account via conditional expectations, we obtain~that
\begin{equation}\label{eq:MGF:1}
 \E(e^{-sX}) \le  \E\bigpar{e^{-sX_v} e^{sY_v}}  = \E \Bigsqpar{\E\bigpar{e^{-sX_v} e^{sY_v} \: \big| \: \Gamma(v)}}.
\end{equation}
Note that conditional on~$\Gamma(v)=S$ we have a new product space, 
where each of the remaining $N(N-1)$ potential pairs~$V_1 \setminus\{v\} \times V_2 \setminus \{v\}$ of~$G_{N,N,p}$ is included independently with probability~$p$. 
Furthermore, in this product space we have~$Y_v=\sum_{w \in S}I^{v}_{w}$, i.e., the range of summation in~$Y_v$ is deterministic. 
Since~$e^{-sX_v}$ is a increasing function and~$e^{sY_v}$ is a decreasing decreasing function (of the edge-status of the remaining $N(N-1)$ potential pairs), 
by the FKG inequality and the discussed independence of~$X_v$ and~$\Gamma(v)$ it follows~that 
\begin{equation}\label{eq:MGF:2}
\begin{split}
\E\bigpar{e^{-sX_v} e^{sY_v} \: \big| \:  \Gamma(v)=S} & \le \E\bigpar{e^{-sX_v} \: \big| \:  \Gamma(v)=S} \E\bigpar{e^{sY_v} \: \big| \: \Gamma(v)=S} \\
& = \E\bigpar{e^{-sX_v}} \E\bigpar{e^{sY_v} \: \big| \: \Gamma(v)=S} .
\end{split}
\end{equation}
The crux is now that, due to the bipartite structure of~$G_{N,N,p}$, all summands~$I^{v}_{w}$ of ${Y_v=\sum_{w \in S}I^{v}_{w}}$ are independent random variables (as all the vertices~$w \in S=\Gamma(v)$ are in the same vertex class of~$G_{N,N,p}$). 
Since each~$I^{v}_{w}$ is independent of~$\Gamma(v)$, 
writing~$\Pi_w:=\Pr(I^{v}_{w}=1)$ it follows~that 
\begin{equation}\label{eq:MGF:2b}
\begin{split}
\E\bigpar{e^{sY_v} \: \big| \: \Gamma(v)=S} & = \E\bigpar{\prod_{w \in S}e^{sI^{v}_{w}} \: \big| \: \Gamma(v)=S} = \prod_{w \in S} \E\bigpar{e^{sI^{v}_{w}} \: \big| \: \Gamma(v)=S}\\
& = \prod_{w \in S} \E\bigpar{e^{sI^{v}_{w}}} = \prod_{w \in S}\bigpar{1+(e^{s}-1)\Pi_w} \\
& = \underbrace{\prod_{w \in V: w \sim v}\bigpar{1+(e^{s}-1)\Pi_w}^{\indic{vw \in E(G_{N,N,p})}}}_{=: Z_v} ,
\end{split}
\end{equation}
where for the last equality we used that only vertices~$w \in V$ with~$w \sim v$ can potentially be neighbors of~$v$ in~$G_{N,N,p}$. 
Since each edge of~$G_{N,N,p}$ is included independently with probability~$p$,  it follows~that 
\begin{equation}\label{eq:MGF:3}
\begin{split}
\E(Z_v) & = \prod_{w \in V: w \sim v} \E \Bigsqpar{\bigpar{1+(e^{s}-1)\Pi_w}^{\indic{vw \in E(G_{N,N,p})}}} \\
& = \prod_{w \in V: w \sim v}\Bigsqpar{1+(e^{s}-1)\Pi_w p} . 
\end{split}
\end{equation}
Combining first~\eqref{eq:MGF:1} with~\eqref{eq:MGF:2}--\eqref{eq:MGF:2b}, and then~\eqref{eq:MGF:3} with the well-known inequality~$1+x \le e^x$, we obtain~that
\begin{equation}\label{eq:IvMGF:upper}
\E(e^{-sX}) \le \E\bigpar{e^{-sX_v}} \E(Z_v) \le \E\bigpar{e^{-sX_v}} e^{(e^{s}-1)\delta_v p},
\end{equation}
where, in view of~$\Pi_w=\Pr(I^{v}_{w}=1)=(1-p)^{N-1}$, $\mu = 2N(1-p)^N$ and~${\Delta = 2N^2(1-p)^{2N-1}}$, we have 
\begin{equation}\label{def:deltav}
\delta_v := \sum_{w \in V: w \sim v}\Pi_w = N(1-p)^{N-1} = \Delta/\mu .
\end{equation}

To sum up, after inserting~\eqref{eq:IvMGF:eq} and~\eqref{eq:IvMGF:upper} into~\eqref{eq:MGF}, 
using~\eqref{def:deltav} and~$\mu=\sum_{v \in V}\Pr(I_v=1)$ we obtain~that  
\begin{equation}\label{eq:MGF:lower}
\begin{split}
-\bigpar{\log \Phi(s) }' & \ge \sum_{v \in V} \Pr(I_v=1) e^{-s}  e^{-(e^{s}-1)\delta_v p} = \mu \cdot e^{-s} e^{-(e^{s}-1) \sigma} , 
\end{split}
\end{equation}  
where~$\sigma := \Delta p/\mu$. 
Integrating the right-hand side of~\eqref{eq:MGF:lower} is non-trivial, 
but it turns out that we obtain a tractable integral if we use the well-known inequality~$e^{-x} \ge 1-x$ to estimate~$e^{-(e^{s}-1) \sigma}$ from below. 
Namely, after inserting the estimate~\eqref{eq:MGF:lower} into inequality~\eqref{eq:zero}, 
using~$e^{-(e^{s}-1)\sigma} \ge 1-(e^{s}-1)\sigma$ it follows~that 
\begin{equation}\label{eq:zero:2}
\begin{split}
-\log \Pr(X=0) 
& \ge \mu \int_{0}^{\lambda} \Bigpar{(1+\sigma)e^{-s} - \sigma} \dd s = \mu \Bigpar{(1+\sigma)(1-e^{-\lambda})- \sigma \lambda} .
\end{split}
\end{equation}
The right-hand side of~\eqref{eq:zero:2} is maximized by choosing~$\lambda = \log\bigpar{1+1/\sigma}$, 
which in view of the identities~${(1+\sigma)(1-e^{-\lambda})}=1$ and~$\sigma = \Delta p/\mu$ 
then readily establishes the upper bound in~\eqref{eq:Poisson}.

The remaining lower bound in~\eqref{eq:Poisson} is well-known and straightforward. 
Indeed, since the indicators~$I_v$ are decreasing functions (of the edge-status of all~$N^2$ potential pairs in~$G_{N,N,p}$),  
by applying the FKG inequality and the well-known inequality~${1-x \ge e^{-x-x^2}}$ for~${0 \le x \le 1/2}$, 
using~${\pi_v :=\Pr(I_v=1)} \le 1/2$ it follows~that 
\[
\Pr(X=0) =\Pr\Bigpar{\bigcap_{v \in V}\{I_v=0\}} \ge \prod_{v \in V} \Pr\xpar{I_v=0}  = \prod_{v \in V}(1-\pi_v) \ge e^{-\sum_{v \in V}\pi_v(1+\pi_v)} .
\]
Noting that~$\mu=\sum_{v \in V}\pi_v$ and~$\pi_v= (1-p)^N=\mu/(2N)$ then completes the proof of inequality~\eqref{eq:Poisson}. 
\end{proof}

The estimates in the above proof are optimized for the Poisson-like case~$\sigma =\Delta p/\mu \to 0$ relevant in our later application in Section~\ref{sec:Poisson}. 
As pointed out by Svante Janson, in the complementary case~$\sigma \to \infty$ our upper bound~\eqref{eq:Poisson} yields $\Pr(X=0) \le e^{-(1+o(1)) \mu/(2\sigma)}$, 
whereas a more careful integration (using the substitution~${t=e^s-1}$ and inequality~${(t+1)^{-2} \ge 1-2t}$) of estimate~\eqref{eq:MGF:lower} yields $\Pr(X=0) \le e^{-(1+o(1)) \mu/\sigma}$.

\section{Two-point concentration: Proof of Theorem~\ref{thm:main}}\label{sec:proof}
This section is devoted to the proof of our main result Theorem~\ref{thm:main}, 
which establishes two-point concentration of $\gamma(G_{n,p})$ for~$(\log n)^{3}(\log n)^{2/3} \le p \le 1$ 
using the first and second moment method. 
For mathematical convenience, we henceforth restrict our attention to edge-probabilities~$p=p(n)$ with~$(\log n)^3n^{-2/3} \leq p \leq (\log n)^{-2}$. 
This restriction is justified, since the earlier work~\cite{WG2001,glebov2015} already covers\footnote{Two point concentration of~$\gamma(G_{n,p})$ for~$p \ge (\log n)^{-2}$ can be deduced from earlier work as follows:
for~$p \to 1$ from~\cite[Theorem~1.1 and Section~3.3]{glebov2015}, 
for constant~$p \in (0,1)$ from~\cite[Theorem~4]{WG2001}, 
and for~$(\log n)^{-2} \le p \ll 1$ from~\cite[Theorem~1.1]{glebov2015}.} 
the case~$p \geq (\log n)^{-2}$. 
To avoid clutter, we henceforth also tacitly assume that~$n$ is sufficiently large whenever necessary (as usual). 

We now introduce the basic proof setup. 
Let~$X_\rsize$ denote the random variable counting the number of dominating sets of size~$\rsize$. 
To avoid clutter, by exploiting symmetry we introduce the shorthand
\begin{equation}
\label{def:tau}
\tau := \pr(A \text{ is a dominating set}) \qquad \text{ for any~$A\in \tbinom{[n]}{\rsize}$.}
\end{equation}
Since~$A$ is a dominating set if and only if every vertex outside of~$A$ has at least one neighbor inside~$A$, 
it follows that the domination probability equals
\begin{equation}
\label{eq:tau}
\tau := \bigpar{1-(1-p)^\rsize}^{n-\rsize} .
\end{equation}
Using linearity of expectation, we infer that the expected number of dominating sets of size~$\rsize$ equals
\begin{equation}
\label{eq:EXr}
\E X_{\rsize} = \sum_{A \in \binom{n}{\rsize}} \tau = \binom{n}{\rsize} \cdot \bigpar{1-(1-p)^\rsize}^{n-\rsize}.
\end{equation}
In order to reuse some standard estimates from previous work~\cite{glebov2015}, it will be convenient to~introduce 
\begin{equation}\label{def:hatr}
\hat{\rsize}=\hat{\rsize}(n,p) := \min\bigcpar{\rsize \: : \: \E X_\rsize \geq 1/(np)},
\end{equation}
where the minimum is taken over all integers~$r$ (in~\cite{glebov2015} the parameter~$\hat{\rsize}$ is shifted by~$-1$). 
Under the discussed extra assumption~$p \leq (\log n)^{-2}$ 
we have~${1/\log(1-p)}={p^{-1}\bigpar{1+O(p)}} = {p^{-1}\bigpar{1+o(1/\log n)}}$, 
which together with Observation~2.1 and Lemma~2.4 from~\cite{glebov2015} readily establishes the following key properties of~$\hat{\rsize}$. 
\begin{lemma}[Properties of~$\hat{\rsize}$ from~\cite{glebov2015}]\label{lem:estimates}
For any integer~$r$ with~$|r-\hat{\rsize}|=O(1)$ we have
\begin{equation}
\label{eq:rsize}
r = \log_{1/(1-p)}\left(\frac{np}{\log^2(np)}(1+o(1))\right) = \frac{\log(np)-2\log\log(np)+o(1)}{p} .
\end{equation}
Furthermore, $\E X_{\hat{\rsize}-1} \to 0$ and~$\E X_{\hat{\rsize}+1} \to \infty$. \noproof
\end{lemma}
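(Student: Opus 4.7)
The plan is to deduce both statements by combining Observation~2.1 and Lemma~2.4 of~\cite{glebov2015} with the expansion $-\log(1-p) = p(1+O(p)) = p(1+o(1/\log n))$ valid under our restriction $p \le (\log n)^{-2}$; essentially all that is required is to absorb the $-1$ shift in our convention for $\hat{\rsize}$ and to sharpen the error terms to $o(1)$.

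For the asymptotic formula~\eqref{eq:rsize}, I would start from~\eqref{eq:EXr} and the definition of $\hat{\rsize}$, take logarithms, and use $\log\binom{n}{r} = r\log(en/r)(1+o(1))$ together with $\log(1-(1-p)^{\rsize}) = -(1-p)^{\rsize}(1+o(1))$ (the latter valid since $(1-p)^{\rsize} = o(1)$ for $\rsize$ in this range) to solve
\[
(n-\rsize)(1-p)^{\rsize} = \rsize\log(en/\rsize)(1+o(1)).
\]
Taking $\log_{1/(1-p)}$ of both sides and iterating once with the rough bound $\rsize p \approx \log(np)$ (so $\log(en/\rsize) = \log(np) - \log\log(np) + O(1)$) yields the first equality in~\eqref{eq:rsize}. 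For the second equality I would expand $\log(1/(1-p)) = p(1+O(p))$ and divide by it; the extra error introduced is $O(p\log(np))$, which is $o(1)$ thanks to our restriction $p \le (\log n)^{-2}$.

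For the monotonicity claim, I would compute the ratio
\[
\frac{\E X_{\rsize+1}}{\E X_{\rsize}} = \frac{n-\rsize}{\rsize+1} \cdot \frac{(1-(1-p)^{\rsize+1})^{n-\rsize-1}}{(1-(1-p)^{\rsize})^{n-\rsize}}
\]
and show it is super-polynomial in $np$ near $\hat{\rsize}$. Writing $1-(1-p)^{\rsize+1} = (1-(1-p)^{\rsize}) + p(1-p)^{\rsize}$, the second factor becomes $\exp\bigpar{np(1-p)^{\rsize}(1+o(1))}$, and substituting the estimate $(1-p)^{\rsize} \approx \log^2(np)/(np)$ extracted from the first claim shows that this ratio is at least $\exp(\log^2(np)(1+o(1)))$, which is far larger than any power of $np$. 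Since $\E X_{\hat{\rsize}} \ge 1/(np)$ by the definition of $\hat{\rsize}$, this gives $\E X_{\hat{\rsize}+1} \to \infty$, while $\E X_{\hat{\rsize}-1} < 1/(np) \to 0$ follows from the definition together with $np \to \infty$ (which holds since $p \ge (\log n)^3 n^{-2/3}$ forces $np \ge (\log n)^3 n^{1/3}$).

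The main obstacle is controlling the error in the asymptotic formula tightly enough to obtain $o(1)$ rather than $O(1)$ in~\eqref{eq:rsize}; the error $O(p\log(np))$ produced by the conversion from $\log_{1/(1-p)}$ to $1/p$ would otherwise spoil the statement, and this is precisely why the extra restriction $p \le (\log n)^{-2}$ is imposed at the outset of the section.
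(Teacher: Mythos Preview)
Your proposal is correct and follows exactly the paper's approach: the paper does not give a self-contained proof but simply cites Observation~2.1 and Lemma~2.4 of~\cite{glebov2015} together with the expansion $-1/\log(1-p)=p^{-1}(1+O(p))=p^{-1}(1+o(1/\log n))$ under the standing restriction $p\le(\log n)^{-2}$, which is precisely your opening paragraph. Your subsequent sketch of how those cited results are actually proved (solving for~$r$ from the expectation equation, and bounding the ratio $\E X_{r+1}/\E X_r$) is additional detail that the paper omits, but it is accurate and matches the arguments in~\cite{glebov2015}.
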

\noindent
Note that the lower bound $ \gamma( G_{n,p}) \ge \hat{r}$ whp (with high probability, i.e., with probability tending to one as~$n \to \infty$) follows immediately from Lemma~\ref{lem:estimates}.
Indeed, using Markov's inequality and Lemma~\ref{lem:estimates} we~obtain
\begin{equation}
\pr(\gamma(G_{n,p}) \leq \hat{\rsize}-1) = \pr(X_{\hat{\rsize}-1} \geq 1) \leq \E X_{\hat{\rsize}-1} \to 0 ,
\end{equation}
establishing that whp~$\gamma(G_{n,p}) \geq \hat{\rsize}$, as desired.

It remains to prove that whp~$\gamma(G_{n,p}) \leq \hat{\rsize}+1 =:\rsize$. In the remainder of
this section we prove this using the second moment method.
Note that $\E X_\rsize \gg 1$ by Lemma~\ref{lem:estimates}. 
Using Chebychev's inequality it then follows~that
\begin{equation}
\pr(\gamma(G_{n,p}) \ge \rsize+1) \le \Pr(X_\rsize=0) \le \Pr(|X_\rsize- \E X_\rsize| \ge \E X_\rsize) \le \frac{\Var X_\rsize}{\E[X_\rsize]^2}.
\end{equation} 
To complete the proof of Theorem~\ref{thm:main}, it thus remains to prove that~$\Var X_\rsize  = o(\E[X_\rsize]^2)$. 
Here we organize our variance estimate differently from previous work~\cite{W1981,WG2001,glebov2015}, in order to exploit cancellation effectively. 
Note that two random vertex-subsets $A, B \in \binom{[n]}{\rsize}$ have a typical intersection size of around~$\rsize^2/n$.
Motivated by this we introduce the cutoff parameter 
\begin{equation}
\label{eq:r0}
\rsize_0 := \floor{\rsize^2\log(np)/n},
\end{equation}
where the form~\eqref{eq:rsize} of~$r$ shows that~$\rsize_0 = O(\log^3(np)/(np^2))$. 
Hence~$\rsize_0=0$ when~$p \gg \log(n)^{3/2}n^{-1/2}$. 
Similarly to the domination probability~$\tau$ from~\eqref{def:tau}, 
by exploiting symmetry we introduce the~shorthand
\begin{equation}
\label{eq:rho}
\rho(\sint) := \pr(A,B \text{ are dominating sets}) \qquad \text{ for any~$A,B\in \tbinom{[n]}{\rsize}$ with $\abs{A \cap B} = \sint$,}
\end{equation}
where we shall henceforth always write~$\sint$ for the intersection size of two potential dominating sets. 
With foresight we then decompose the variance into two parts, depending on the size of the intersections:
\begin{equation}
\label{eq:var}
{\Var X_\rsize} 
= \sum_{A,B \in \binom{[n]}{\rsize}} \bigsqpar{\rho(\sint)-\tau^2} \leq \underbrace{\sum_{\substack{A,B\in \binom{[n]}{\rsize}:\\\abs{A \cap B} \leq \rsize_0}} \bigsqpar{\rho(\sint)-\tau^2}}_{=:\mathbb{V}_1} + \underbrace{\sum_{\substack{A,B\in \binom{[n]}{\rsize}:\\\abs{A \cap B} > \rsize_0}} \rho(\sint)}_{=:\mathbb{V}_2}.
\end{equation}

To analyze the variance $\Var X_\rsize$, good upper bounds on the probability~$\rho(\sint)$ are therefore key, since~$\tau$ is known explicitly by~\eqref{eq:tau}. 
In all earlier work~\cite{W1981,WG2001,glebov2015} the following simple upper bound on~$\rho(\sint)$ was used:
\begin{equation}\label{eq:sintbig}
\begin{split}
\rho(\sint) &\le \pr\bigpar{\text{$A$ and $B$ both dominate $[n] \setminus (A \cup B)$}},\\
&= \pr\bigpar{\text{every vertex~$v \in [n] \setminus (A \cup B)$ has a neighbor in both~$A$ and~$B$}}\\
& = \bigpar{1-2(1-p)^\rsize+(1-p)^{2\rsize-\sint}}^{n-2\rsize+\sint}.
\end{split}
\end{equation}
Our main innovation is to analyze the probability~$\rho(\sint)$ more carefully, starting with the observation that
\begin{equation}\label{eq:rhoseq}
\begin{split}
\rho(\sint) &= \pr\bigpar{\text{$A$ and $B$ both dominate $[n] \setminus (A \cup B)$}} \cdot \pr\bigpar{\text{$A$ and $B$ dominate each other}}.
\end{split}
\end{equation}
The extra term $\pr(A,B\text{ dominate each other})$ was ignored in all earlier work~{\cite{W1981,WG2001,glebov2015}}: 
it is not relevant for the edge-probabilities~$p=p(n)$ treated in those works, as discussed in Section~\ref{sec:barrier} of the Introduction. 
Our new Poisson approximation argument from Section~\ref{sec:novelty} is crucial for analyzing this extra term: 
in Section~\ref{sec:Poisson} it allows us to prove the following key~result.
\begin{lemma}[Main technical result]
\label{lem:Poisson}
For any $A,B \in \binom{[n]}{\rsize}$ with ${\abs{A \cap B} = \sint}$, 
\begin{equation}
\label{eq:rho:domeachother}
\pr\bigpar{\text{$A$ and $B$ dominate each other}} \leq (1+o(1)) \cdot (1-2(1-p)^\rsize)^{\rsize-\sint} .
\end{equation}
\end{lemma}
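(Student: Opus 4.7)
The plan is to interpret ``$A$ and $B$ dominate each other'' as the event $\{X=0\}$ for a sum $X$ of mildly dependent indicator variables, and then apply the adapted Janson-style argument of Section~\ref{sec:novelty} essentially verbatim. Write $A':=A\setminus B$, $B':=B\setminus A$, $W:=A\cap B$, and $V:=A'\cup B'$, so that $|A'|=|B'|=\rsize-\sint$ and $|W|=\sint$. The event in question asks that every $v\in A'$ has a neighbor in $B'\cup W$ and every $v\in B'$ has a neighbor in $A'\cup W$; let $I_v$ indicate failure of this condition for $v\in V$, and set $X:=\sum_{v\in V}I_v$. Then $\Pr(I_v=1)=(1-p)^\rsize$ and $\mu:=\E X=2(\rsize-\sint)(1-p)^\rsize$. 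Using $\rsize=\Theta(\log(np)/p)$ and $(1-p)^\rsize=\Theta(\log^2(np)/(np))$ from Lemma~\ref{lem:estimates}, one checks that $(\rsize-\sint)(1-p)^{2\rsize}=o(1)$ uniformly in $\sint$ in our regime, and hence $(1-2(1-p)^\rsize)^{\rsize-\sint}=(1+o(1))e^{-\mu}$. The lemma therefore reduces to the Poisson-approximation bound $\Pr(X=0)\le(1+o(1))e^{-\mu}$.

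The dependency structure among the $I_v$ is exactly of the bipartite type handled in Theorem~\ref{thm:Poisson}: if $v,w$ lie on the same side of $V$ then $I_v$ and $I_w$ depend on disjoint edge sets and are therefore independent, while if $v\in A'$ and $w\in B'$ they share only the single potential edge $vw$. I would thus adapt the Section~\ref{sec:novelty} proof essentially verbatim. Fix $v\in V$, say $v\in A'$, and for each $w\in V\setminus\{v\}$ define $I_w^v$ to be the indicator that $w$ is bad with the edge $vw$ ignored: for $w\in A'\setminus\{v\}$ this is just $I_w$, while for $w\in B'$ it is the indicator that $w$ has no neighbor in $(A'\setminus\{v\})\cup W$, so that $\Pi_w:=\Pr(I_w^v=1)=(1-p)^{\rsize-1}$. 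With $X_v:=\sum_{w\in V\setminus\{v\}}I_w^v$ and $Y_v:=\sum_{w\in\Gamma(v)\cap B'}I_w^v$, where $\Gamma(v)$ denotes the neighbors of $v$ in $G_{n,p}$, one has the decomposition $X=I_v+X_v-Y_v$, and the three key ingredients of the Theorem~\ref{thm:Poisson} proof all carry over with no essential change: $X_v$ is independent of the edges incident to $v$; the event $\{I_v=1\}$ forces $\Gamma(v)\cap(B'\cup W)=\emptyset$ and so $Y_v=0$; and the $I_w^v$ for $w\in B'$ are mutually independent, since their defining edge sets inside $B'\times((A'\setminus\{v\})\cup W)$ are pairwise disjoint. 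Rerunning the moment generating function computation verbatim then produces
\[
\Pr(X=0)\;\le\;\exp\Bigl(-\mu+\Delta p\cdot\log\bigl(1+\mu/(\Delta p)\bigr)\Bigr),\qquad \Delta\;:=\;2(\rsize-\sint)^2(1-p)^{2\rsize-1}.
\]

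It remains to show that the correction $\Delta p\cdot\log(1+\mu/(\Delta p))$ is $o(1)$ uniformly in $\sint$. Using $(\rsize-\sint)^2\le\rsize^2$ together with the Lemma~\ref{lem:estimates} estimates yields $\Delta p=O(\log^6(np)/(n^2p^3))$ and $\log(1+\mu/(\Delta p))=O(\log n)$, so the correction is at most $O(\log^7(np)/(n^2p^3))$, which is $o(1)$ whenever $p\ge(\log n)^3n^{-2/3}$. Combined with the conversion $e^{-\mu}=(1+o(1))(1-2(1-p)^\rsize)^{\rsize-\sint}$ from the first paragraph, this proves the lemma. The main obstacle is structural rather than quantitative: one must check that the presence of the common part $W$ does not disrupt any step of the Section~\ref{sec:novelty} machinery, and in particular that the only difference from the clean bipartite setting is the shift of $\Pi_w$ from $(1-p)^{|B'|-1}$ to $(1-p)^{|B'\cup W|-1}=(1-p)^{\rsize-1}$, which propagates through $\Delta$ in the obvious way.
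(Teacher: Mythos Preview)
Your proof is correct and follows essentially the same approach as the paper's own argument in Section~\ref{sec:Poisson}: both reduce to $\Pr(X=0)$ for the same sum of indicators, adapt the Section~\ref{sec:novelty} machinery to the case $|A\cap B|=s$ with the modified parameters $\mu=2(r-s)(1-p)^r$, $\Pi_w=(1-p)^{r-1}$, $\Delta=2(r-s)^2(1-p)^{2r-1}$, and then verify that the correction term $\Delta p\log(1+\mu/(\Delta p))$ is $o(1)$ in the regime $p\ge(\log n)^3 n^{-2/3}$. The only cosmetic difference is that the paper explicitly factors $I_v=J_vL_v$ (bipartite part times $A\cap B$ part) before invoking the Section~\ref{sec:novelty} argument, whereas you build this directly into your definition of $I_w^v$; the resulting computations are identical.
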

\noindent
By combining the estimate~\eqref{eq:rhoseq} with~\eqref{eq:sintbig} and~\eqref{eq:rho:domeachother} 
we then obtain the following improved upper bound on~$\rho(s)$, 
the crux being that it involves power~$n-\rsize$ instead of~$n-2\rsize+\sint$ as~before:
\begin{equation}
\label{eq:rho:improved}
\rho(s) \le (1+o(1)) \cdot \bigpar{1-2(1-p)^\rsize+(1-p)^{2\rsize-\sint}}^{n-\rsize} .
\end{equation}

For intersections of size~$s=\abs{A \cap B} \le \rsize_0$ and $p \ge (\log n)^3 n^{-2/3}$, 
we show in Section~\ref{sec:v1bound} that
the improved estimate~\eqref{eq:rho:improved} is sufficient to
establish approximate independence of the event that $A$ dominates $B$ and the event that $B$ dominates $A$.
\begin{lemma}[Asymptotic Independence]\label{lem:v1bound}
For~$\rsize_0$ as defined in~\eqref{eq:r0} above, for any~${0 \le \sint \leq \rsize_0}$ we have 
\begin{equation}
\label{eq:cancelbound}
\rho(\sint) \le (1+o(1)) \tau^2 .
\end{equation}
\end{lemma}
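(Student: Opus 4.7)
The plan is to start from the improved bound~\eqref{eq:rho:improved} and compare it to the explicit expression $\tau^2 = (1-2T+T^2)^{n-\rsize}$ with $T := (1-p)^\rsize$ (from~\eqref{eq:tau}), and to show that the ratio is $1+o(1)$ uniformly for $0 \le \sint \le \rsize_0$. Writing the ratio as
\begin{equation*}
\frac{\rho(\sint)}{\tau^2} \le (1+o(1)) \Bigpar{1 + \frac{T^2 \bigpar{(1-p)^{-\sint}-1}}{(1-T)^2}}^{n-\rsize},
\end{equation*}
and invoking $1+x \le e^x$, it suffices to show that the exponent $(n-\rsize) T^2 \bigpar{(1-p)^{-\sint}-1}/(1-T)^2$ tends to $0$ uniformly for $\sint \le \rsize_0$.

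For this I would assemble three ingredients from Lemma~\ref{lem:estimates}. First, \eqref{eq:rsize} gives $T = (1+o(1))\log^2(np)/(np)$, so $T^2 = (1+o(1))\log^4(np)/(np)^2$ and $(1-T)^2 = 1-o(1)$. Second, \eqref{eq:rsize} also yields $\rsize = (1+o(1))\log(np)/p$, hence
\begin{equation*}
\rsize_0 \le \rsize^2 \log(np)/n = (1+o(1))\log^3(np)/(np^2).
\end{equation*}
Third, in our range $p \ge (\log n)^3 n^{-2/3}$ we have $\rsize_0 p = O(\log^3(np)/(np)) = o(1)$, and so $\sint p \le \rsize_0 p = o(1)$ for all $\sint \le \rsize_0$; this in turn lets me use $(1-p)^{-\sint}-1 \le \sint p(1+o(1))$.

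Combining the three ingredients, the exponent is bounded by $(1+o(1)) n T^2 \rsize_0 p$, and substituting the estimates for $T^2$ and $\rsize_0$ gives
\begin{equation*}
n T^2 \rsize_0 p = O\bigpar{\log^7(np)/(n^2 p^3)},
\end{equation*}
which is $o(1)$ exactly when $p \gg (\log n)^{7/3}/n^{2/3}$. The hypothesis $p \ge (\log n)^3 n^{-2/3}$ provides this with room to spare. The heart of Lemma~\ref{lem:v1bound} is thus already contained in the improved estimate~\eqref{eq:rho:improved} (which packages the Poisson approximation Lemma~\ref{lem:Poisson}); beyond that, the proof is routine asymptotic algebra. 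The main care needed is to keep the $o(1)$-terms uniform in $\sint \le \rsize_0$ and to carefully track the polylogarithmic factors in $np$, since the full strength of the hypothesis on $p$ is only used at the very last step to make the exponent vanish.
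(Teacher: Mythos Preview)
Your proof is correct and follows essentially the same route as the paper's: start from the improved bound~\eqref{eq:rho:improved}, form the ratio with $\tau^2=(1-T)^{2(n-\rsize)}$, apply $1+x\le e^x$, and then use~\eqref{eq:prsize}--\eqref{eq:binomapprox} together with $\sint p\le \rsize_0 p=o(1)$ to bound the exponent by $O(\log^7(np)/(n^2p^3))=o(1)$. The paper's argument is organized identically (see~\eqref{eq:usgeneral}--\eqref{eq:usgeneral:2}), and in fact arrives at the very same final expression $\log^7(np)/(n^2p^3)$.
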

\noindent
This result enables cancellation in the second moment calculation 
for much smaller edge-probabilities ${p=p(n)}$ than 
in earlier~work~\cite{W1981,WG2001,glebov2015}, i.e., below~${p=n^{-1/2}}$. 
In particular, using estimates~\eqref{eq:cancelbound} and~\eqref{eq:EXr} it follows~that
\begin{equation}
\label{eq:smallr}
\mathbb{V}_1 = \sum_{\substack{A,B\in \binom{[n]}{\rsize}:\\ \abs{A \cap B} \leq \rsize_0}}\bigsqpar{\rho(\sint)-\tau^2}\leq \sum_{\substack{A,B\in \binom{[n]}{\rsize}:\\ \abs{A \cap B} \leq \rsize_0}} o(\tau^2) = o(\E[X_\rsize]^2) .
\end{equation}

For intersections of size~$s=\abs{A \cap B} > \rsize_0$, we simply write
\begin{equation}
\label{eq:v2}
\frac{\mathbb{V}_2}{\E[X_\rsize]^2} = \sum_{\rsize_0 < \sint \le \rsize}\sum_{\substack{A,B\in \binom{[n]}{\rsize}:\\\abs{A \cap B} = \sint}} \frac{\rho(\sint)}{\E[X_\rsize]^2} = \sum_{\rsize_0 < \sint \le \rsize} \underbrace{\frac{\binom{n}{\rsize}\binom{\rsize}{\sint}\binom{n-\rsize}{\rsize-\sint}}{\binom{n}{\rsize}\binom{n}{\rsize}} \cdot \frac{\rho(\sint)}{\tau^2}}_{=:\uu_\sint}.
\end{equation}
Here the simple upper bound~\eqref{eq:sintbig} on the probability~$\rho(\sint)$ would suffice, 
but the improved upper bound~\eqref{eq:rho:improved} leads to some simplifications in the calculations. 
In Section~\ref{sec:v2bound} we shall thus use~\eqref{eq:rho:improved} and standard estimates for binomial coefficients (which intuitively encapsulate that only few pairs~$A,B$ have rather large~intersections) to obtain the following result.
\begin{lemma}[Counting Estimate]\label{lem:v2bound}
For~$\uu_\sint$ and~$\rsize_0$ as defined in~\eqref{eq:r0} and~\eqref{eq:v2} above, 
\begin{equation}
\label{eq:usbound}
\sum_{\rsize_0 < \sint \le \rsize} \uu_\sint = o(1).
\end{equation}
\end{lemma}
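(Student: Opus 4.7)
My plan is to substitute the improved probability bound~\eqref{eq:rho:improved} and a standard binomial coefficient estimate into the definition of $\uu_\sint$, and then show the resulting bound is small enough throughout $\sint \in (\rsize_0, \rsize]$ to yield $\sum_{\rsize_0 < \sint \le \rsize}\uu_\sint = o(1)$. Setting $\alpha := (1-p)^{\rsize}$, so that $\alpha = (1+o(1))\log^2(np)/(np)$ by~\eqref{eq:rsize}, and using $(1-p)^{2\rsize-\sint} = \alpha^2(1-p)^{-\sint}$ and $\tau=(1-\alpha)^{n-\rsize}$, combined with the Chernoff-type hypergeometric estimate $\binom{\rsize}{\sint}\binom{n-\rsize}{\rsize-\sint}/\binom{n}{\rsize} \le (e\rsize^2/(\sint n))^\sint$, this gives
\begin{equation*}
\log \uu_\sint \le -\sint \log\bigpar{\sint n/(e \rsize^2)} + (1+o(1))(n-\rsize)\alpha^2\bigpar{(1-p)^{-\sint}-1}/(1-\alpha)^2.
\end{equation*}

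A short derivative analysis shows that this bound is unimodal on $(\rsize_0, \rsize]$ with a minimum in the interior. Writing $\sigma \approx (n-\rsize)\alpha^2 p = \log^4(np)/(np) \cdot (1+o(1))$, the derivative $-\log(\sint n/(e\rsize^2)) - 1 + \sigma (1-p)^{-\sint}$ is itself U-shaped, since its derivative $-1/\sint + \sigma p (1-p)^{-\sint}$ is strictly increasing and changes sign exactly once. At $\sint = \rsize_0+1$ the derivative is negative (the counting contribution $-\log\log(np)$ dominates, as $\sigma = o(1)$ in our regime), while at $\sint = \rsize$ it is positive (since $\sigma(1-p)^{-\rsize} \approx (n-\rsize)\alpha p \approx \log^2(np)$ dominates $\log(n/\rsize) = O(\log n)$), so there is exactly one crossing. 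Hence $\max_{\rsize_0 < \sint \le \rsize}\uu_\sint \le \uu_{\rsize_0+1} + \uu_\rsize$. At the left endpoint, $\log \uu_{\rsize_0+1} \le -\rsize_0\log\log(np)(1+o(1))$ from the counting factor alone (using $\sint n/(e\rsize^2) \ge \log(np)/(2e)$ for $\sint > \rsize_0$). At the right endpoint, \eqref{eq:rho:improved} is tight ($\rho(\rsize)=\tau$), giving $\uu_\rsize \le (1+o(1))/\E X_\rsize$.

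Summing, $\sum_{\rsize_0 < \sint \le \rsize}\uu_\sint \le \rsize \cdot (\uu_{\rsize_0+1} + \uu_\rsize)$. The first term is $o(1)$ because $\rsize_0 = \Omega(n^{1/3}/\log^3 n) \to \infty$ dominates $\log \rsize = O(\log n)$ under $p \ge (\log n)^3 n^{-2/3}$. The second term equals $\rsize/\E X_\rsize$, which is $o(1)$ because $\log \E X_\rsize \ge \log^2(np)(1+o(1)) - \log\log(np)$---obtained by iterating the growth ratio $\E X_{r+1}/\E X_r \ge (np/\log(np)) e^{\log^2(np)(1+o(1))}$ from $\E X_{\hat\rsize} \ge 1/(np)$ (implicit in the definition of $\hat\rsize$)---dominates $\log\rsize = O(\log n)$ in our range of $p$. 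The main technical obstacle is the unimodality bookkeeping: while the derivative's sign at each endpoint is transparent, rigorously pinning down exactly one crossing requires careful case analysis near intermediate $\sint$, specifically controlling the location where $\sigma p(1-p)^{-\sint}$ crosses $1/\sint$; a fallback is to split $(\rsize_0,\rsize]$ at $\sint = 1/p$ and use $(1-p)^{-\sint}-1 = O(\sint p)$ for $\sint \le 1/p$, while for $\sint > 1/p$ one leverages the crude bound $\alpha^2(1-p)^{-\sint} \le \alpha$ together with the exponential decay of the counting factor beyond $\sint \approx 1/p$.
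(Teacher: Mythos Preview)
Your argument has a genuine gap at the left endpoint. You assert that $\rsize_0 = \Omega(n^{1/3}/\log^3 n) \to \infty$, but this is only the value of $\rsize_0 \approx \log^3(np)/(np^2)$ at the \emph{bottom} $p \approx (\log n)^3 n^{-2/3}$ of the admissible range; for larger~$p$ the cutoff shrinks, and in fact $\rsize_0 = 0$ whenever $p \gg (\log n)^{3/2}n^{-1/2}$ (as the paper notes just after~\eqref{eq:r0}). In that regime your bound $\rsize \cdot \uu_{\rsize_0+1} = \rsize \cdot \uu_1$ is not $o(1)$: for instance at $p = n^{-1/3}$ one has $\rho(1)/\tau^2 = 1+o(1)$ and $\uu_1 = \Theta(\rsize^2/n)$, so $\rsize \cdot \uu_1 = \Theta(\rsize^3/n) = \Theta(\log^3 n) \to \infty$. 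The underlying issue is that replacing $\sum_s \uu_s$ by $\rsize \cdot \max_s \uu_s$ costs an extra factor of~$\rsize$ that the left-endpoint value cannot absorb across the full range of~$p$; your unimodality observation is correct as far as it goes, but identifying the maximum is no substitute for the geometric decay that actually makes the sum small.

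The paper avoids this loss by summing the decay directly, splitting $(\rsize_0,\rsize]$ into three pieces and proving $\uu_s \ll (\log(np))^{-s/2}$ on $\rsize_0 < s \le \rsize-1/p$, which already sums to $o(1)$. Your fallback sketch is in this spirit but is also incomplete near $s=\rsize$: the crude bound $\alpha^2(1-p)^{-s} \le \alpha$ yields $\rho(s)/\tau^2 \le e^{(1+o(1))n\alpha}$ with $n\alpha \approx \log^2(np)/p$, which \emph{exactly} matches the exponent $-\rsize\log(n/(e\rsize))$ of the counting factor at $s=\rsize$, so the two do not beat each other there. This is why the paper handles $\rsize-1/p < s \le \rsize$ by a separate argument, bounding $\uu_s \cdot \E X_\rsize$ (and using only $\E X_\rsize \to \infty$, not the stronger growth you invoke) to obtain $\uu_s \ll n^{-(\rsize-s)/3}$.
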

\noindent
Applying estimates~\eqref{eq:v2} and~\eqref{eq:usbound} 
we have
\begin{equation}
\label{eq:bigr}
\mathbb{V}_2 = \E[X_\rsize]^2 \cdot \sum_{\rsize_0 < \sint \le \rsize} \uu_\sint = o(\E[X_\rsize]^2).
\end{equation}

To sum up: from~\eqref{eq:smallr} and~\eqref{eq:bigr} it follows that $\Var X_\rsize \le \mathbb{V}_1+\mathbb{V}_2 =  o(\E[X_\rsize]^2)$, as desired.
In order to complete the proof of Theorem~\ref{thm:main}, it thus remains to prove Lemmas~\ref{lem:Poisson}--\ref{lem:v2bound} in the upcoming Section~\ref{sec:deferred}.

\subsection{Deferred proofs and estimates}\label{sec:deferred}
In this section we give the deferred proofs of Lemmas~\ref{lem:Poisson}--\ref{lem:v2bound}. 
Note that all these results use~$\rsize=\hat{\rsize}+1$, where~$\hat{\rsize}=\hat{\rsize}(n,p)$ is as defined in~\eqref{def:hatr}. 
Hence estimate~\eqref{eq:rsize} applies to~$\rsize$, 
from which it follows~that 
\begin{equation}\label{eq:prsize}
(1-p)^\rsize = (1+o(1))\frac{\log^2(np)}{np} = (1+o(1))\frac{\rsize\log(np)}{n}.
\end{equation}
In the upcoming proofs we shall also use the following well-known binomial approximation result: 
\begin{equation}\label{eq:binomapprox}
(1-p)^a = 1-ap(1+o(1))  \qquad \text{for any integer~$a$ with~$|a| = o(1/p)$.}
\end{equation}

\subsubsection{Poisson approximation: Proof of Lemma~\ref{lem:Poisson} }\label{sec:Poisson}
In this subsection we prove Lemma~\ref{lem:Poisson}, which is our main technical result. 
To establish the claimed bound~\eqref{eq:rho:domeachother} for all intersections sizes~$0 \le \sint \le \rsize$, we need to estimate the probability that~$A$ and $B$ dominate each other when~$|A|=|B|=r$ and~$|A \cap B|=s$.
We first note that~\eqref{eq:rho:domeachother} holds trivially when~$\sint=\rsize$, 
so we may henceforth assume that~$0 \le s \le \rsize-1$. 
Let~$\cE_x$ with~$x \in A \setminus B$ denote the event that~$x$ has no neighbor in~$B$, and let~$\cE_y$ with~$y \in B \setminus A$ denote the event that~$y$ has no neighbor in~$A$. 
Note that $A$ and~$B$ dominate each other if and only if none of the events~$\cE_v$ with~$v \in A \triangle B= (A \setminus B) \cup (B \setminus A)$ hold. 
Defining~$I_v$ as the indicator random variable for the even that~$\cE_v$ holds, we thus obtain~that
\begin{equation}
\label{eq:rho:domeachother:rewritten}
\pr\bigpar{\text{$A$ and $B$ dominate each other}} =\pr(X=0) \qquad \text{ for } \qquad X:=\sum_{v \in A \triangle B}I_v .
\end{equation}
It is instructive to note that in the special case~$s=0$ where~$A$ and~$B$ are disjoint, the random variable~$X$ is exactly the random variable discussed in Section~\ref{sec:novelty} with~$N=r$ and~$V=A \triangle B$.
In the following we complete the proof of Lemma~\ref{lem:Poisson} by observing that it is straightforward to 
generalize the argument given in Section~\ref{sec:novelty} to account for arbitrary intersections sizes~${s = |A \cap B|}$.

In the general case~$0 \le s \le r-1$, we follow the argument in Section~\ref{sec:novelty} with~$V_1:=A \setminus B$ and~$V_2:=B \setminus A$. 
Note that the edges between $V_1$ and $ V_2$, which we denote
$ G_{n,p}[V_1, V_2]$, form a random bipartite graph $ G_{N,N,p}$ with~$N= r-s$.
Of course, we also need to account for the edges between $V:=V_1 \cup V_2 = A \triangle B$ and $ A\cap B$. 
For any vertex~$v \in V= A \triangle B$ we therefore can write
\[
I_v=J_v L_v ,
\]
where~$J_v$ is the indicator random variable for the event that~$v$ is isolated in~$G_{n,p}[V_1,V_2]$, 
and~$L_v$ is the indicator random variable for the event that~$v$ has no neighbors in~$A \cap B = {(A \cup B) \setminus V}$. 
Note that~$L_v$ is a function of the edge-status of all $s=|A \cap B|$ potential pairs~$\{v\} \times (A \cap B)$, 
which in turn implies that all the~$L_v$ variables are independent random variables (since they depend on disjoint sets of random variables). 
Note that~$\sum_{v \in A \triangle B}J_v$ would fit exactly into the framework of Section~\ref{sec:novelty} with~$N=r-s$ and~$V=A \triangle B$. 
The fact that all the~$L_v$ are independent then makes it easy to see that all arguments from Section~\ref{sec:novelty} carry over to~$X=\sum_{v \in V}J_vL_v$ (as all applications of the FKG inequality and all conditional independence arguments remain valid), 
with the following modified parameters 
\begin{equation}\label{eq:mod:param}
\begin{split}
\mu & = 2(r-s)(1-p)^r,\\
\Delta &= 2(r-s)^2(1-p)^{2r-1},\\ 
\Pi_w &= (1-p)^{r-1},\\
\delta_v &=(r-s)(1-p)^{r-1} = \Delta/\mu.
\end{split}
\end{equation}
In particular, the proof of the upper bound in~\eqref{eq:Poisson} again~gives
\begin{equation}
\label{eq:rho:domeachother:Poisson}
\Pr(X=0) \le e^{-\mu + \Delta p \cdot  \log(1+\mu/(\Delta p))} .
\end{equation}

It remains to show that~\eqref{eq:rho:domeachother:Poisson} implies the desired estimate~\eqref{eq:rho:domeachother}. 
Noting that~$\Delta = \Theta(\mu^2)$ as well as~$\mu p \ge (1-p)^r p \gg 1/n$ and~$\mu \le 2r (1-p)^r \le (2+o(1)) \log^3(np)/(np^2)$, 
it follows that
\[
\Delta p \cdot  \log(1+\mu/(\Delta p)) \le O(1) \cdot \frac{\log^6(np)}{n^2p^3} \cdot \log(n) = o(1).
\]
In view of~\eqref{eq:rho:domeachother:rewritten}, \eqref{eq:mod:param} and~\eqref{eq:rho:domeachother:Poisson}, we infer that
\begin{equation}
\label{eq:rho:domeachother:Poisson:upper}
\pr\bigpar{\text{$A$ and $B$ dominate each other}} \le (1+o(1)) e^{-2(r-s)(1-p)^r} .
\end{equation}
Next, note that~\eqref{eq:prsize} implies~${(1-p)^\rsize \ll 1}$, 
and that~\eqref{eq:prsize} combined with the form~\eqref{eq:rsize} of~$\rsize$ yields
\[
(\rsize-s) (1-p)^{2\rsize} \le (1+o(1))\frac{\log^5(np)}{n^2p^3} = o(1).
\] 
Using that the well-known bound~$1+x=e^{x+O(x^2)}$ for~$|x| \le 1/2$, 
it thus readily follows that
\[
(1-2(1-p)^{\rsize})^{\rsize-s}= (1+o(1)) e^{-2(\rsize-s)(1-p)^{\rsize}} ,
\]
which together with~\eqref{eq:rho:domeachother:Poisson:upper} then establishes the desired estimate~\eqref{eq:rho:domeachother}, completing the proof of Lemma~\ref{lem:Poisson}. 
\noproof

\subsubsection{Asymptotic independence: Proof of Lemma~\ref{lem:v1bound}}
\label{sec:v1bound}
In this subsection we prove Lemma~\ref{lem:v1bound}, which concerns intersections of size $0 \leq \sint \leq \rsize_0 = \floor{\rsize^2\log(np)/n}$. 
To establish the claimed bound~\eqref{eq:cancelbound} we need to estimate the probabilities~$\rho(s)/\tau^2$ defined in~\eqref{eq:tau} and~\eqref{eq:rho}.
Combining the improved upper bound~\eqref{eq:rho:improved} on~$\rho(s)$ with the exact form~\eqref{eq:tau} of~$\tau$, 
using the well-known estimate~$1+x \leq e^x$ together with~$(1-p)^r \ll 1$ and~$r \ll n$ it routinely follows that 
\begin{equation}\label{eq:usgeneral}
\begin{split}
\frac{\rho(\sint)}{\tau^2}
&\leq (1+o(1))\lrsqpar{\frac{1-2(1-p)^{\rsize}+(1-p)^{2\rsize-\sint}}{1-2(1-p)^\rsize+(1-p)^{2\rsize}}}^{n-\rsize}\\
& = (1+o(1)) \lrsqpar{1+\frac{(1-p)^{2\rsize-\sint}-(1-p)^{2\rsize}}{1-2(1-p)^\rsize+(1-p)^{2\rsize}}}^{n-r}\\
&\leq (1+o(1)) \exp\Bigcpar{(1+o(1)) n \lrsqpar{(1-p)^{2\rsize-\sint}-(1-p)^{2\rsize}}}.
\end{split}
\end{equation}
Note that the above estimate~\eqref{eq:usgeneral} did not use~$\sint \leq \rsize_0$, i.e., is valid for any~${0 \leq \sint \le \rsize}$. 
By inserting the estimates~\eqref{eq:prsize}--\eqref{eq:binomapprox} and~$\sint \leq \rsize_0 \leq \rsize^2\log(np)/n \ll 1/p$ into~\eqref{eq:usgeneral}, 
using the form~\eqref{eq:rsize} of~$\rsize$ it follows that
\begin{equation}\label{eq:usgeneral:2}
\begin{split}
\frac{\rho(\sint)}{\tau^2} &\leq (1+o(1)) \exp\Bigcpar{(1+o(1))n \cdot \lrsqpar{(1-p)^{-\sint}-1} \cdot (1-p)^{2\rsize} }\\
&\leq (1+o(1))\exp\left\{(1+o(1))n \cdot p\sint \cdot \left(\frac{\log^2(np)}{np}\right)^{2} \right\}\\
&\leq (1+o(1))\exp\left\{(1+o(1)) \frac{\rsize^2\log(np)}{n} \cdot \frac{\log^4(np)}{np} \right\}\\
&\leq (1+o(1))\exp\left\{(1+o(1))\frac{\log^7(np)}{n^2p^3}\right\} = 1+o(1).
\end{split}
\end{equation}
This establishes the desired estimate~\eqref{eq:cancelbound}, completing the proof of Lemma~\ref{lem:v1bound}. 
\noproof

\subsubsection{Counting estimate: Proof of Lemma~\ref{lem:v2bound}}
\label{sec:v2bound}
In this subsection we prove Lemma~\ref{lem:v2bound}, which concerns intersections of size~$\floor{\rsize^2\log(np)/n} = \rsize_0 < \sint \le r$.
To establish the claimed bound~\eqref{eq:usbound} we need to estimate the parameter~$\uu_\sint$ defined in~\eqref{eq:v2}, 
which consists of two ratios: certain binomial coefficients and the probabilities~$\rho(s)/\tau^2$.
For the the probabilities~$\rho(s)/\tau^2$ we will reuse estimate~\eqref{eq:usgeneral}, which is valid for any~$0 \le s \le r$ (as discussed in Section~\ref{sec:v1bound}). 
For the binomial coefficients, we use the standard estimate~$s! \ge (s/e)^s$ together with~$r \ll n$ to obtain that
\begin{equation}\label{eq:usbinom}
\begin{split}
\frac{\binom{n}{\rsize}\binom{\rsize}{\sint}\binom{n-\rsize}{\rsize-\sint}}{\binom{n}{\rsize}\binom{n}{\rsize}} 
& = \frac{1}{s!} \cdot \biggsqpar{\frac{r!}{(r-s)!}}^2 \cdot \frac{(n-r)!}{(n-2r+s)!} \cdot \frac{(n-r)!}{n!} \\
& \le (e/s)^s \cdot r^{2s} \cdot (n-r)^{r-s} \cdot (n-r)^{-r}\\
& \le (e/s)^s \cdot r^{2s} \cdot n^{-s}e^{O(rs/n)} \le \biggpar{\frac{e^{2}r^2}{sn}}^{s} .
\end{split}
\end{equation}
For technical reasons, we now estimate~$\uu_\sint$ using three case distinctions (depending on the range of~$s$).

\vskip3mm

\noindent
\textbf{Case~1: $r_0 < \sint \leq 1/(p\log(np))$.} \\
\noindent
Here we proceed similarly to~\eqref{eq:usgeneral:2} in Section~\ref{sec:v1bound}. Indeed, by inserting the estimates~\eqref{eq:prsize}--\eqref{eq:binomapprox} and~$p\sint \ll 1$ into~\eqref{eq:usgeneral}, 
using the form~\eqref{eq:rsize} of~$\rsize$ it follows (with room to spare) that
\begin{equation}\label{eq:case2}
\begin{split}
\frac{\rho(\sint)}{\tau^2}
& \le \exp\Bigcpar{(1+o(1)) n \cdot \lrsqpar{(1-p)^{-\sint}-1} \cdot (1-p)^{2\rsize}}\\
& \le \exp\Biggcpar{(1+o(1)) n \cdot ps \cdot \left(\frac{\log^2(np)}{np}\right)^{2}}\\
&\leq \exp\Biggcpar{\frac{(1+o(1))\sint \log^4(np)}{np}} \leq e^{o(s)}.
\end{split}
\end{equation}
Combining this estimate with~\eqref{eq:usbinom} and the assumed lower bound~$\sint > r_0$, which implies~$\sint \ge {r_0+1} \ge {\rsize^2\log(np)/n}$, 
it follows that~$\uu_\sint$ defined in~\eqref{eq:v2} satisfies (for all sufficiently large~$n$) the upper bound
\begin{equation}\label{eq:case2:intermediate}
u_s \le \biggpar{\frac{e^{3}r^2}{sn}}^{s} \le \biggpar{\frac{O(1)}{\log(np)}}^{s} \ll \biggpar{\frac{1}{\log(np)}}^{s/2} .
\end{equation}

\vskip3mm

\noindent
\textbf{Case~2: $1/(p\log(np)) < \sint \leq \rsize-1/p$.} \\
\noindent
Proceeding similarly to~\eqref{eq:case2}, 
using~\eqref{eq:usgeneral} and~\eqref{eq:prsize} it follows~that
\begin{equation}\label{eq:case3}
\begin{split}
\frac{\rho(s)}{\tau^2} 
& \le \exp\Bigcpar{(1+o(1)) n \cdot (1-p)^{\rsize} \cdot (1-p)^{\rsize-\sint}}\\
& \le \exp\Bigcpar{(1+o(1)) \log(np) \cdot r (1-p)^{\rsize-\sint}} .
\end{split}
\end{equation}
To estimate~$r (1-p)^{\rsize-\sint}$ we employ another case distinction.
If~$3/4 \cdot \rsize \leq \sint \leq \rsize-1/p$, then 
\[
\frac{r (1-p)^{\rsize-\sint}}{\sint} \le \frac{4}{3} \cdot e^{-p(\rsize-\sint)} \le \frac{4}{3} \cdot e^{-1} \le 0.499 .
\]
If~$1/(p\log(np)) < \sint \leq 3/4 \cdot \rsize$, then 
it follows that
\begin{equation*}
\begin{split}
\frac{r(1-p)^{\rsize-\sint}}{\sint} & \le O(1) \cdot \log^2(np) \cdot (1-p)^{(1-3/4)\rsize} \\
& \le O(1) \cdot \log^2(np) \left(\frac{\log^2(np)}{np}\right)^{1/4} = o(1) .
\end{split}
\end{equation*}
Putting both cases together, in view of~\eqref{eq:case3} we each time obtain that
\begin{equation}\label{eq:case3:2}
\frac{\rho(s)}{\tau^2} \le \exp\Bigcpar{s/2 \cdot  \log(np)} .
\end{equation}
Combining this estimate with~\eqref{eq:usbinom} and the assumed lower bound~$\sint > 1/(p\log(np))$, 
it follows that~$\uu_\sint$ defined in~\eqref{eq:v2} satisfies (for all sufficiently large~$n$) the upper bound
\begin{equation}\label{eq:case3:intermediate}
u_s \le \biggpar{\frac{e^{2}r^2}{sn}}^{s} \cdot (np)^{s/2} \le \biggpar{\frac{O(1) \cdot \log^3(np)}{np}}^{s} \cdot (np)^{s/2} \ll \biggpar{\frac{1}{\log(np)}}^{s/2} .
\end{equation}


\vskip3mm

\noindent
\textbf{Case~3: $\rsize-1/p < \sint \le r$.} \\
\noindent
Here we redo our estimates, in order to exploit that~$r-s$ is small. 
Proceeding similarly to~\eqref{eq:usgeneral} and~\eqref{eq:usbinom}, using~$\E X_\rsize \gg 1$ as well as~$\binom{r}{s}=\binom{r}{r-s} \le r^{r-s}$ and~$r \ll n$ it follows~that
\begin{equation}\label{eq:case5}
\begin{split}
\uu_\sint & \ll \uu_\sint \cdot \E X_\rsize = \binom{\rsize}{\sint}\binom{n-\rsize}{\rsize-\sint} \cdot \frac{\rho(s)}{\tau}\\
& \leq (rn)^{(\rsize-\sint)} \cdot (1+o(1)) \lrsqpar{\frac{1-2(1-p)^{\rsize}+(1-p)^{2\rsize-\sint}}{1-(1-p)^\rsize}}^{n-\rsize} \\
&\leq n^{2(\rsize-\sint)} \cdot (1+o(1)) \lrsqpar{1+\frac{(1-p)^{2\rsize-\sint}-(1-p)^{\rsize}}{1-(1-p)^{\rsize}}}^{n-\rsize} \\
&\leq n^{2(\rsize-\sint)} \cdot (1+o(1)) \exp\Bigcpar{(1+o(1))n(1-p)^\rsize \cdot \bigsqpar{(1-p)^{\rsize-\sint}-1}}.
\end{split}
\end{equation}
It is well-known (and easy to show by induction) that~$(1-x)^t \le 1-tx + \binom{t}{2}x^2$ for any integer~$t \ge 0$ and real~$x \in [0,1]$. 
Since~$0 \le \rsize-\sint \le 1/p$ is an integer, it follows that
\begin{equation*}
(1-p)^{\rsize-\sint} \le 1-p(\rsize-\sint) + [p(\rsize-\sint)]^2/2 \le 1 - p(\rsize-\sint)/2 .
\end{equation*}
Together with~\eqref{eq:case5}, in view of~\eqref{eq:prsize} and~$\log^2(np) \gg \log n$ it follows (for all sufficiently large~$n$) that  
\begin{equation}\label{eq:case5:intermediate}
\begin{split}
\uu_\sint & \ll  \exp\Biggcpar{2(\rsize-\sint)\log n + \frac{(1+o(1))\log^2(np)}{p} \cdot \bigsqpar{-p(\rsize-\sint)/2}} \le \biggpar{\frac{1}{n}}^{(\rsize-\sint)/3} .
\end{split}
\end{equation}

\vskip3mm

To sum up these case distinctions: combining the three above estimates~\eqref{eq:case2:intermediate}, \eqref{eq:case3:intermediate} and~\eqref{eq:case5:intermediate} for~$\uu_\sint$, 
by using that~$\sint > r_0$ implies~$\sint \ge 1$ it readily follows that $\sum_{r_0 < s \le r} \uu_\sint = o(1)$.
This establishes the desired estimate~\eqref{eq:usbound}, completing the proof of Lemma~\ref{lem:v2bound} and thus Theorem~\ref{thm:main}, as discussed. 
\noproof

\section{No two-point concentration: Proof of Lemma~\ref{lem:anti}}\label{sec:ss}
In this section we prove the anti-concentration result Lemma~\ref{lem:anti}, 
which shows that two-point concentration of~$\gamma(G_{n,p})$ fails for~$n^{-1} \ll p \ll (\log n)^{2/3}n^{-2/3}$.  
In fact, we shall prove the following generalization of Lemma~\ref{lem:anti}, 
which intuitively shows that the domination number $\gamma(G_{n,p})$ 
of the binomial random graph~$G_{n,p}$ is not concentrated on~$1+O(\log(np)/(np^{3/2}))$ many~values.
%
\begin{lemma}\label{lem:anti:general}
Fix~$\eps \in (0,1/4]$. Assume that~$p=p(n)$ satisfies~$n^{-1} \ll p \ll 1$.  
Then for an infinite sequence of~$n$ there exists~$q \in \bigl[p(n),2p(n)\bigr]$ such that~$\max_{[s,t]}\Pr\bigl(\gamma(G_{n,q}) \in [s,t]\bigr) \le 1-\eps$, 
where the maximum is taken over any interval~${[s,t]}$ of length~$t-s \le \eps \log(nq)/(nq^{3/2})$. 
\end{lemma}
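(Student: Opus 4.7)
The plan is to adapt the monotone coupling argument developed by Alon--Krivelevich~\cite{ak} and Sah--Sawhney~\cite{bohman2022} for the clique cover and independence numbers to the domination number. We define $\{G_{n,q}\}_{q \in [p, 2p]}$ simultaneously on one probability space via independent uniform edge-times $U_e \in [0,1]$ (placing~$e$ in~$G_{n,q}$ iff $U_e \le q$), so that $\gamma_q := \gamma(G_{n,q})$ is integer-valued and non-increasing in~$q$ almost surely.

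The three key quantitative inputs are as follows. (i)~The first-moment lower bound $\gamma_q \ge \hat{\rsize}(n,q)$ w.h.p.\ for each fixed~$q$, which follows from the argument given immediately after Lemma~\ref{lem:estimates}. (ii)~A deterministic greedy (Arnautov--Payan type) upper bound $\gamma_q \le (1+o(1))\log(nq)/q = \hat{\rsize}(n,q)+o(\hat{\rsize}(n,q))$ w.h.p., which crucially does not rely on second-moment tools and so remains valid in the regime $p \ll n^{-2/3}$ where the machinery of Section~\ref{sec:proof} breaks down. (iii)~The asymptotic $\hat{\rsize}(n,p) - \hat{\rsize}(n,2p) = (1+o(1))\log(np)/(2p) =: \Delta$ from~\eqref{eq:rsize}. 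Together (i)--(iii) give the crucial scale-separation $\Delta \gg L(q) := \eps\log(nq)/(nq^{3/2})$ for $q\in[p,2p]$, since $p \gg n^{-1}$ implies $nq^{1/2} \gg 1$.

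I then argue by contradiction: suppose that, for all sufficiently large~$n$ and every $q \in [p(n),2p(n)]$, there is an integer interval $I_q$ of length at most~$L(q)$ with $\pr(\gamma_q \in I_q) > 1-\eps/(2K)$, for $K=K(\eps)$ a large constant. Following the template of~\cite{ak,bohman2022}, I pick $K$ equally spaced points $q_1=p<\cdots<q_K=2p$, union-bound to reduce to the event $\mathcal{G} := \bigcap_i \{\gamma_{q_i} \in I_{q_i}\}$ (which has probability $\ge 1-\eps/2$), and exploit both the monotonicity $\gamma_{q_1} \ge \cdots \ge \gamma_{q_K}$ along the coupling and the endpoint bounds from (i)--(iii) to derive a contradiction. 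The heart of the argument compares the expected descent $\E[\gamma_{q_1}-\gamma_{q_K}]=(1+o(1))\Delta$ against the telescoping chain $\sum_i(\gamma_{q_i}-\gamma_{q_{i+1}})$, whose individual summands are constrained on~$\mathcal{G}$ by $\min I_{q_{i+1}} \le \max I_{q_i}$; once the $\eps/(2K)$-slack and the $o(\Delta)$ Arnautov--Payan error are absorbed, the ratio $\Delta/L \to \infty$ forces the desired contradiction.

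The main obstacle is the lack of a tight upper bound on $\gamma_q$ in the regime $p \ll n^{-2/3}$: since the second-moment tools of Section~\ref{sec:proof} break down (precisely as flagged in Section~\ref{sec:barrier}), we must resort to the cruder Arnautov--Payan bound, which contributes an $O(\log\log(nq)/q)$ error term. Fortunately this is $o(\Delta)$ in our range, so it is absorbed by the constant-factor slack provided by choosing $\eps$ small and $K=K(\eps)$ large; and the ``infinite sequence of~$n$'' clause in the statement provides further freedom to discard any finite set of problematic~$n$'s.
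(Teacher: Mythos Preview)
Your proposal has a genuine gap at the telescoping step. From the monotone coupling you correctly deduce that on~$\mathcal G$ one has $\gamma_{q_i}\ge\gamma_{q_{i+1}}$, hence $\max I_{q_i}\ge\gamma_{q_i}\ge\gamma_{q_{i+1}}\ge\min I_{q_{i+1}}$, i.e., $t_i\ge s_{i+1}$. But this is the \emph{wrong direction} for the telescoping bound: to conclude $t_1-s_K\le\sum_i(t_i-s_i)\le K\cdot L$ you would need $s_i\le t_{i+1}$, the opposite inequality. With only $t_i\ge s_{i+1}$ there is no obstruction whatsoever to the intervals $I_{q_1},\dots,I_{q_K}$ sitting like a descending staircase at heights $\hat r(q_1)>\cdots>\hat r(q_K)$, each of length $\le L$, with gaps of size $\Delta/(K-1)\gg L$ between consecutive ones. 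All your hypotheses are then satisfied and no contradiction arises, regardless of how large the ratio $\Delta/L$ is.

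There is also a logical slip in your contradiction hypothesis: the negation of the lemma gives intervals with $\Pr(\gamma_q\in I_q)>1-\eps$, not $>1-\eps/(2K)$. With the correct bound a union bound over the $K$ grid points only yields $\Pr(\mathcal G)\ge 1-K\eps$, so $K<1/\eps\le 4$, and with $K$ bounded the staircase picture above is unavoidable. The paper circumvents both issues by taking $I\asymp n\sqrt p/\eps$ grid points spaced $\Theta(\sqrt p/n)$ apart and using a total-variation coupling (Lemma~\ref{lem:coupling}) so that $\Pr(G_{n,p_i}=G_{n,p_{i+1}})\ge 1-1.99\eps$; combined with the two concentration events this gives $\Pr\bigl(\gamma_{p_i}\in I_{p_i}\cap I_{p_{i+1}}\bigr)>0$ for \emph{each} $i$ separately, whence the deterministic overlap $s_i\le t_{i+1}$ in the needed direction without any global union bound. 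The endpoint estimate $\gamma_{p_1}-\gamma_{p_I}=(1+o(1))\log(np)/(2p)$ is simply quoted from~\cite{glebov2015} (Lemma~\ref{lem:typical}); your detour through the Arnautov--Payan bound is unnecessary here and is not where the difficulty lies.
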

This result can be proved by an adaptation of the anti-concentration argument of Sah and Sawhney for the independence number, 
which in turn is similar to an argument of Alon and Krivelevich (see Section~5 in~\cite{bohman2022} and Section~4 in~\cite{ak}). 
We shall present our proof as a discrete derivative argument, inspired by the anti-concentration argument of Heckel for the chromatic number (see Section~2.5 in~\cite{h} and Section~2 in~\cite{hr}).
Our proof of Lemma~\ref{lem:anti:general} combines two ingredients: the typical asymptotics of the domination number $\gamma(G_{n,p})$, and a coupling result between~$G_{n,p}$ and~$G_{n,q}$ when~$q=p+O(\sqrt{p}/n)$; see Lemmas~\ref{lem:typical} and~\ref{lem:coupling} below. 
Lemma~\ref{lem:typical} follows from Theorem~1.1 and Observation~2.1 in~\cite{glebov2015}.
Asymptotic variants of Lemma~\ref{lem:coupling} are folklore (see Lemma~3.1 in~\cite{AF} and Example~1.5 in~\cite{SJ}, for example); 
see Appendix~\ref{apx:coupling} for a short non-asymptotic proof.
\begin{lemma}[\cite{glebov2015}]\label{lem:typical}
If~$p=p(n)$ satisfies~$n^{-1} \ll p \ll 1$, then whp $\gamma(G_{n,p}) =  {(1+o(1))\log(np)/p}$. 
\end{lemma}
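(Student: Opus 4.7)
The plan is to sandwich $\gamma(G_{n,p})$ between $(1-o(1))\log(np)/p$ and $(1+o(1))\log(np)/p$ whp via a two-sided moment argument. The lower bound is essentially free from material already developed. Formula~\eqref{eq:rsize} of Lemma~\ref{lem:estimates}, combined with the observation that $\log\log(np) = o(\log(np))$ whenever $np \to \infty$, shows that the parameter $\hat{\rsize} = \hat{\rsize}(n,p)$ from~\eqref{def:hatr} satisfies $\hat{\rsize} = (1+o(1))\log(np)/p$. The Markov-inequality argument given immediately after Lemma~\ref{lem:estimates} (based on $\E X_{\hat{\rsize}-1} \to 0$) then shows that $\gamma(G_{n,p}) \ge \hat{\rsize}$ whp, which yields the lower bound.

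For the upper bound $\gamma(G_{n,p}) \le (1+o(1))\log(np)/p$ whp, I would apply the second moment method to $X_\rsize$ at a threshold $\rsize$ with a small but nontrivial cushion above $\hat{\rsize}$. Concretely, pick $\eps_n \to 0$ slowly enough that $\eps_n \log(np) \to \infty$, and set $\rsize := \lceil (1+\eps_n)\log(np)/p\rceil$. With this much slack, $\E X_\rsize$ grows much faster than in the critical window used for Theorem~\ref{thm:main}, which makes the variance estimate substantially easier: the classical bound~\eqref{eq:sintbig} on $\rho(\sint)$ already suffices, so no Poisson refinement analogous to Lemma~\ref{lem:Poisson} is needed. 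A variance decomposition analogous to~\eqref{eq:var}, with small- and large-intersection estimates mirroring Sections~\ref{sec:v1bound} and~\ref{sec:v2bound} but with the enlarged cushion absorbing the error terms, gives $\Var X_\rsize = o(\E[X_\rsize]^2)$; Chebyshev's inequality then yields $\gamma(G_{n,p}) \le \rsize$ whp.

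The main technical obstacle is to choose $\eps_n$ so that both the growth condition $\eps_n \log(np) \to \infty$ and the variance bounds hold uniformly across the entire range $n^{-1} \ll p \ll 1$, especially in the sparsest subrange where $np$ diverges only slowly and the logarithmic factors offer little margin. Alternatively, as noted after the lemma, the asymptotic statement follows from combining Theorem~1.1 and Observation~2.1 of~\cite{glebov2015}, so one can simply cite those results for a complete proof.
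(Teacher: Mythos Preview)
The paper does not prove Lemma~\ref{lem:typical}: it simply records that the result follows from Theorem~1.1 and Observation~2.1 of~\cite{glebov2015}. Your closing sentence correctly identifies this, and that citation \emph{is} the paper's entire proof.

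Your main self-contained route, however, has a genuine gap. Lemma~\ref{lem:estimates} (and in particular formula~\eqref{eq:rsize} and the conclusion $\E X_{\hat{\rsize}-1}\to 0$) is developed in Section~\ref{sec:proof} under the standing hypothesis $(\log n)^3 n^{-2/3}\le p\le (\log n)^{-2}$ announced at the opening of that section; you cannot invoke it for the full range $n^{-1}\ll p\ll 1$ needed in Lemma~\ref{lem:typical}, and in particular not for the very sparse regime $n^{-1}\ll p\le n^{-2/3}$ where Lemma~\ref{lem:typical} is actually applied (in the proof of Lemma~\ref{lem:anti:general}). The underlying estimates from~\cite{glebov2015} may well be valid over the wider range, but then you are back to citing~\cite{glebov2015}, which is precisely what the paper does. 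Your upper-bound sketch via the second moment with a cushion $\eps_n\log(np)/p$ is reasonable in spirit, but as you yourself flag, the uniform-in-$p$ variance control across the whole range (especially when $np$ diverges slowly) is not carried out; at that level of detail it is a plan rather than a proof.
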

\begin{lemma}\label{lem:coupling}
For any edge-probabilities~$p=p(n)$ and~$q=q(n)$ with~$p,q \in (0,1)$ there is a coupling of~$G_{n,p}$ and~$G_{n,q}$ such that $\Pr(G_{n,p}=G_{n,q}) \ge 1-n |p-q|/\sqrt{4q(1-q)}$. 
\end{lemma}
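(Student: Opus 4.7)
The plan is to construct the coupling in two stages. Set~$N:=\binom{n}{2}$. First, realize a maximal (total variation) coupling of the edge counts: sample $(M_p,M_q)$ with~$M_p\sim\text{Bin}(N,p)$ and~$M_q\sim\text{Bin}(N,q)$ so that $\pr(M_p=M_q)=1-d_{TV}\bigpar{\text{Bin}(N,p),\text{Bin}(N,q)}$, where $d_{TV}$ denotes total variation distance. Independently pick a uniformly random bijection $\pi:[N]\to\binom{[n]}{2}$, and set~$G_{n,p}:=\{\pi(i):i\le M_p\}$ and~$G_{n,q}:=\{\pi(i):i\le M_q\}$. A routine check shows these have the correct marginals, using that conditional on~$M_p=k$ the set~$\{\pi(i):i\le k\}$ is a uniformly random $k$-subset of~$\binom{[n]}{2}$, which matches the law of~$G_{n,p}$ conditioned on its edge count equaling~$k$. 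Since~$M_p=M_q$ forces~$G_{n,p}=G_{n,q}$, the problem reduces to proving the bound~$d_{TV}\bigpar{\text{Bin}(N,p),\text{Bin}(N,q)}\le n|p-q|/\sqrt{4q(1-q)}$.

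To establish this total variation bound, I would differentiate the binomial PMF~$f(k,t):=\binom{N}{k}t^k(1-t)^{N-k}$ directly, obtaining $\partial_t f(k,t)=f(k,t)(k-Nt)/(t(1-t))$. Summing in absolute value and applying Jensen's inequality gives
\begin{equation*}
\sum_k\bigabs{\partial_t f(k,t)}=\frac{\E|X_t-Nt|}{t(1-t)}\le\frac{\sqrt{\Var X_t}}{t(1-t)}=\sqrt{\frac{N}{t(1-t)}},
\end{equation*}
where~$X_t\sim\text{Bin}(N,t)$. Exchanging sum and integral then yields
\begin{equation*}
2\,d_{TV}\bigpar{\text{Bin}(N,p),\text{Bin}(N,q)}\le\sum_k\biggabs{\int_{p\wedge q}^{p\vee q}\partial_tf(k,t)\dd t}\le\int_{p\wedge q}^{p\vee q}\sqrt{\frac{N}{t(1-t)}}\dd t.
\end{equation*}

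The main obstacle is extracting the precise constant~$\sqrt{4}=2$ in the claimed denominator. The inequality is trivial whenever its right-hand side exceeds~$1$, so I may assume~$|p-q|\le 2\sqrt{q(1-q)}/n$. Moreover, both~$d_{TV}\bigpar{\text{Bin}(N,p),\text{Bin}(N,q)}$ and the target expression are invariant under~$(p,q)\mapsto(1-p,1-q)$ (with the graph coupling transferred via edge complementation), so I may further assume WLOG that~$q\le 1/2$. Writing~$\delta:=t-q$, the identity $t(1-t)=q(1-q)+\delta(1-2q)-\delta^2$ combined with $|\delta|\le|p-q|\le 2\sqrt{q(1-q)}/n$ shows that~$t(1-t)\ge q(1-q)/2$ uniformly over the integration interval, provided~$n$ and~$q(1-q)$ exceed a small absolute constant. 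Bounding $\sqrt{N/(t(1-t))}\le\sqrt{2N/(q(1-q))}$ under this constraint, and using $\sqrt{2N}=\sqrt{n(n-1)}\le n$, I obtain
\begin{equation*}
2\,d_{TV}\bigpar{\text{Bin}(N,p),\text{Bin}(N,q)}\le |p-q|\sqrt{2N/(q(1-q))}\le n|p-q|/\sqrt{q(1-q)},
\end{equation*}
which is exactly the required bound. The few remaining degenerate regimes (very small~$n$, or~$q(1-q)$ so small that the pointwise lower bound on~$t(1-t)$ fails) can be resolved by direct inspection or by falling back on the naive per-edge coupling~$G_{n,p}\subseteq G_{n,q}$ obtained from i.i.d.~$\text{Uniform}(0,1)$ edge weights.
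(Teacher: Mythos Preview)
Your coupling construction and the reduction to $d_{TV}\bigl(\mathrm{Bin}(N,p),\mathrm{Bin}(N,q)\bigr)$ are correct, and this reduction is also the paper's first step. From there the paper takes a different and shorter route: Pinsker's inequality together with additivity of Kullback--Leibler divergence for product measures gives
\[
d_{TV}\bigl(\mathrm{Bin}(N,p),\mathrm{Bin}(N,q)\bigr)\le\sqrt{\tfrac12\tbinom{n}{2}D_{KL}(P\parallel Q)}
\]
for Bernoulli $P\sim\mathrm{Ber}(p)$, $Q\sim\mathrm{Ber}(q)$, and the elementary estimate $D_{KL}(P\parallel Q)\le(p-q)^2/(q(1-q))$ (via $\log(1+x)\le x$) then yields exactly $n|p-q|/\sqrt{4q(1-q)}$ for all $p,q\in(0,1)$, with no case analysis.

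Your derivative--integral bound $2\,d_{TV}\le\int_{p\wedge q}^{p\vee q}\sqrt{N/(t(1-t))}\,dt$ is valid, but the way you estimate the integral leaves a genuine gap. The crux is your fallback claim: the naive monotone per-edge coupling gives $\Pr(G_{n,p}\neq G_{n,q})\le\binom{n}{2}|p-q|$, which matches the target only when $\binom{n}{2}\le n/\sqrt{4q(1-q)}$, i.e.\ when $q(1-q)\le 1/(n-1)^2$. For large $n$ this is far more restrictive than ``$q(1-q)$ below a small absolute constant'', so the two regimes you describe do not cover everything. Concretely, take $n=100$, $q=2\cdot10^{-4}$, $p=5\cdot10^{-5}$: the non-triviality reduction applies since $|p-q|<2\sqrt{q(1-q)}/n$, and the target reads $d_{TV}\le 0.53$; yet $p(1-p)\approx 5\cdot 10^{-5}<q(1-q)/2\approx 10^{-4}$, so your pointwise bound $t(1-t)\ge q(1-q)/2$ fails at $t=p$, while the naive coupling only gives $\binom{100}{2}|p-q|\approx 0.74$. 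Neither branch of your argument handles this case. The gap is reparable---for instance by integrating $1/\sqrt{t(1-t)}$ more carefully when $p<q\le 1/2$, or by switching to the paper's Pinsker argument---but as written the proof is incomplete.
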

\begin{proof}[Proof of Lemma~\ref{lem:anti:general}]
Define~$\ell(n,p):=\eps \log(np)/(np^{3/2})$, $\Delta := 2.2\eps \sqrt{p}/n$, $p_i: = {p + (i-1) \Delta}$ and $I := \floor{p/\Delta}$. 
We henceforth assume that~$n$ is large enough to ensure~${p \le 1/4}$. Note that~$I \ge 1$ and~$p \le p_i \le p_I \le 2p \le 1/2$. 

Aiming at a contradiction, suppose that the conclusion of Lemma~\ref{lem:anti:general} fails.
This implies the following for all sufficiently large~$n$:  
for each~$p_i=p_i(n)$ with~$1 \le i \le I$ there is an interval~${[s_i,t_i]}$ such~that 
\begin{equation}\label{eq:anti:interval}
\Pr(\gamma(G_{n,p_i}) \in [s_i,t_i]) \ge 1-\eps \quad \text{ and } \quad t_i-s_i \le \ell(n,p_i) ,
\end{equation}
where~$s_i$ and~$t_i$ depend on~$n,p_i$ (which we omit in the notation to avoid clutter). 
Since~$p_1 = p$ and $p_I = (1+o(1)) 2p$, the typical asymptotics from Lemma~\ref{lem:typical} imply that~whp 
\begin{equation}\label{eq:typical:average}
\gamma(G_{n,p_1}) - \gamma(G_{n,p_I}) =  (1+o(1))\frac{\log(np)}{p} - (1+o(1))\frac{\log(np)}{2p} =  (1+o(1))\frac{\log(np)}{2p}.
\end{equation}
Invoking the concentration estimate~\eqref{eq:anti:interval}, with probability at least~$1-2\eps$ we also have
\begin{equation}\label{eq:typical:upper}
t_1 - s_I \ge \gamma(G_{n,p_1}) - \gamma(G_{n,p_I}) .
\end{equation}
Using~$p \le p_i \le 1/2$, we see that~${n |p_{i+1}-p_i|/\sqrt{4p_i(1-p_i)}} \le  {n \Delta/\sqrt{2p} \le 1.99 \eps}$ for any~${1 \le i < I}$. 
Combining the coupling from Lemma~\ref{lem:coupling} with the concentration estimate~\eqref{eq:anti:interval},
with probability at least~$1-3.99\eps> 0$ we thus have $\gamma(G_{n,p_i}) \in {[s_i,t_i] \cap [s_{i+1},t_{i+1}]}$, 
which in turn (deterministically) establishes that
\begin{equation}\label{eq:typical:intervals}
s_i \le t_{i+1} \qquad \text{for all~$1 \le i < I$}.
\end{equation}
We now may assume that~\eqref{eq:typical:average} and~\eqref{eq:typical:upper} both hold, 
since for all sufficiently large~$n$ this event happens with probability at least ${1-2\eps-o(1)>0}$. 
Using first~$np \gg 1$ to optimize~$\ell(n,q)=\eps \log(nq)/(nq^{3/2})$, and then~$I \le n \sqrt{p}/(2.2\eps)$, 
by combining~\eqref{eq:anti:interval} with~\eqref{eq:typical:intervals} it follows for all sufficiently large~$n$~that 
\begin{equation*}
\ell(n,p) = \max_{q \in [p,2p]} \ell(n,q) \ge \frac{1}{I} \sum_{1 \le i \le I} \Bigl[t_i-s_i\Bigr] \ge \frac{t_1-s_I}{I} \ge \bigl(1.1+o(1)\bigr)\frac{\eps\log(np)}{np^{3/2}} ,
\end{equation*}
which gives a contradiction to the definition of~$\ell(n,p)$ for all sufficiently large~$n$. 
\end{proof}

\section{Conclusion and open questions}\label{sec:conclusion}
Our main result Theorem~\ref{thm:main} gives a detailed account of the concentration of the domination number~$\gamma(G_{n,p})$ of~$G_{n,p}$ for $p \ge (\log n)^3 n^{-2/3}$, 
but our understanding of the concentration of~$\gamma(G_{n,p})$ due to Lemma~\ref{lem:anti:general} remains somewhat unsatisfactory for $n^{-1}  \ll p \ll n^{-2/3}$.
%
Given this situation, there are a number of natural and interesting open questions. 
\begin{itemize}
    \item{\bf Concentration.} 
It would be interesting to determine the extent of concentration of~$\gamma(G_{n,p})$ for $n^{-1} \ll p \ll n^{-2/3}$,
where we believe that Lemma~\ref{lem:anti:general} predicts the correct answer.  
    \begin{quest}
    \label{quest:concentrate}
Suppose~$p=p(n)$ satisfies~$n^{-1} \ll p \ll n^{-2/3}$. 
Does there exist an interval~$ I = I(n)$ of length $1/ (np^{3/2}) $ times some poly-logarithmic factor such that
        \[ \Pr ( \gamma(G_{n,p}) \in I ) \to 1 ?\]
    \end{quest}

For~$n^{-1} \ll p \ll n^{-2/3}$, a closely related problem is to determine the precise location of the shortest interval~$I$ on which~$\gamma( G_{n,p})$ is concentrated.  
Interestingly, Glebov, Liebenau and Szab\'o proved (using a clever sprinkling argument) that this location must be some distance from the expectation threshold~$ \hat{r}$ used~\eqref{def:hatr} in this paper:
indeed, Theorem~1.4(c) in \cite{glebov2015} implies that, for some constant~$c>0$, 
	\begin{equation} \label{eq:gls} 
	\Pr\left( \gamma(G_{n,p}) \le \hat{r} + \frac{c\log(np)}{n p^{3/2}} \right) \not\to 1.
	\end{equation}

    \item {\bf Anti-concentration.} 
It would be desirable to prove a stronger anti-concentration result for~$\gamma(G_{n,p})$, 
in particular to obtain a statement that applies to all edge-probabilities~$p=p(n)$ with~$n^{-1} \ll p \ll n^{-2/3}$. 
\begin{quest}\label{quest:anticonc}
Suppose~$p=p(n)$ satisfies~$n^{-1} \ll p \ll n^{-2/3}$. 
Are there fixed~${\eps_0,n_0>0}$ such that  
\[
\max_{n \ge n_0}\max_{I}\Pr\bigl(\gamma(G_{n,p}) \in I\bigr) \le 1-\eps_0,
\]
where the maximum is taken over any interval~$I$ of length $ 1/ (np^{3/2}) $ times some poly-logarithmic factor?
\end{quest}	
An upper bound on the probability that $\gamma(G_{n,p})$ is a particular integer could yield an even stronger anti-concentration statement. 
Indeed, one could answer Question~\ref{quest:anticonc} affirmatively by showing~that 
\[
\max_{k \ge 0}\Pr(\gamma(G_{n,p}) = k) = \frac{\tilde{O}(1)}{n p^{3/2}},
\]
where~$\tilde{O}$ suppresses logarithmic factors, as usual.  
This anti-concentration connection makes it an interesting open problem to establish good upper bounds on the point probabilities~$\Pr(\gamma(G_{n,p}) = k)$.

\item{{\bf Domination number of \bf $ G_{n,m}$}.} 
While two-point concentration of $ \gamma ( G_{n,m} )$ for the uniform random graph with $ m \gg (\log n)^3 n^{4/3} $ edges follows from Theorem~\ref{thm:main}, 
the anti-concentration argument that establishes Lemma~\ref{lem:anti:general} does not readily adapt to $G_{n,m}$. 
Hence the problem of determining the extent of concentration of $ \gamma ( G_{n,m})$ for $n \ll m \ll n^{4/3} $ is widely open. 
Interestingly, in this regime it may be possible that $\gamma (G_{n,m})$ with ${m = p \binom{n}{2}}$ is concentrated on a shorter interval than $\gamma ( G_{n,p})$. 
If this is the case, then variations in~$\gamma( G_{n,p})$ could largely be a result of variation in the number of edges in~$G_{n,p}$. 
%
%
\end{itemize}


\bigskip{\noindent\bf Acknowledgements.} 
We thank Svante Janson for useful comments on Section~\ref{sec:novelty}.

\footnotesize
\bibliographystyle{plain}

\normalsize

\begin{appendix}

\section{Appendix: Proof of the coupling result Lemma~\ref{lem:coupling}}\label{apx:coupling}
\begin{proof}[Proof of Lemma~\ref{lem:coupling}]
Note that~$G_{n,p}$ conditioned on having~$m$ edges has the uniform distribution. 
Denoting the number of edges of the two random graphs by~$X:=e(G_{n,p})$ and~$Y:=e(G_{n,q})$, it follows that the total variation distance~satisfies
\[
d_{\mathrm{TV}}(G_{n,p},G_{n,q}) \le d_{\mathrm{TV}}(X,Y). 
\]
Let~$P$ and~$Q$ denote independent Bernoulli random variables with success probability~$p$ and~$q$, respectively.
Using first Pinsker's inequality and then additivity of the  Kullback–Leibler divergence for product distributions, it follows that 
\[
d_{\mathrm{TV}}(X,Y) \leq \sqrt {{\frac{1}{2}}D_{\mathrm{KL}}(X\parallel Y)} = \sqrt {{\frac{1}{2}} \cdot \binom{n}{2} \cdot D_{\mathrm{KL}}(P\parallel Q)} .
\]
Using the well-known inequality~$\log(1+x) \le x$, a direct calculation then shows that
\begin{align*}
D_{\mathrm{KL}}(P\parallel Q)  & =  p \log\Bigl(\frac{p}{q}\Bigr) + (1-p) \log\Bigl(\frac{1-p}{1-q}\Bigr) \\
& \le \frac{p(p-q)}{q} + \frac{(1-p)(q-p)}{(1-q)} = \frac{(p-q)^2}{q(1-q)} ,
\end{align*}
which in view of~$\binom{n}{2} \leq n^2/2$ completes the proof of Lemma~\ref{lem:coupling}. 
\end{proof}

\end{appendix}

\end{document}